\theoremstyle{plain}
\newtheorem{theorem}{Theorem}[section]
\newtheorem{lemma}[theorem]{Lemma}
\theoremstyle{definition}
\newtheorem{definition}[theorem]{Definition}
\theoremstyle{remark}
\newtheorem{note}[theorem]{Note}
\numberwithin{equation}{section}
\numberwithin{figure}{section}
\title{Length minimization of filling pairs on hyperbolic surfaces}
\author{Ni An, Bhola Nath Saha, Bidyut Sanki}
\date{\today}
\begin{document}
\subjclass[2020]{Primary 57M50; Secondary 57M15, 05C10}

\keywords{}

\begin{abstract}
   A filling pair $(\alpha, \beta)$ of a surface $S_g$ is a pair of simple closed curves in minimal position such that the complement of $\alpha\cup\beta$ in $S_g$ is a disjoint union of topological disks. A filling pair is said to be minimally intersecting if the number of intersections between them, or equivalently, the number of complementary disks, is minimal among all filling pairs of $S_g$. For surfaces of genus $g \geq 3$, minimal filling pairs are well understood, whereas in genus two, such a pair divides the surface into exactly two disks. In this paper, we classify all minimal filling pairs up to the action of the mapping class group in genus two and determine the length of the shortest minimal filling pair.

\end{abstract}

\maketitle

\tikzset{->-/.style={decoration={
  markings,
  mark=at position #1 with {\arrow{>}}},postaction={decorate}}}
  \tikzset{-<-/.style={decoration={
  markings,
  mark=at position #1 with {\arrow{<}}},postaction={decorate}}}
  
\section{Introduction}
Let $S_g$ be a closed, orientable surface of genus $g$.  A \emph{filling system} of $S_g$ is a collection $\Omega=\{ \alpha_1, \dots, \alpha_n \}$ of homotopically distinct simple closed curves whose complement $S_g \setminus \bigcup_i \alpha_i$ is a disjoint union of topological disks. It is assumed that the curves in $\Omega$ are in pairwise minimal position. Filling systems play a fundamental role in the study of surface topology, mapping class groups, and Teichm\"uller spaces. They have various important applications, for instance, in the construction of highly non-trivial mapping class group elements such as pseudo-Anosov maps~\cite{FathiLaudenbachPoenaru2012, Penner}. Thurston~\cite{thurstonspine} has used filling systems in the construction of spines of Teichmüller spaces.

A \emph{filling pair} is a filling system consisting of exactly two curves. A filling pair is said to be \emph{minimally intersecting} or simply \emph{minimal} if the geometric intersection number between the curves, or equivalently, the number of complementary disks, is minimal among all filling pairs. In \cite{AougabMinimally, Sanki2018}, the authors have shown the existence and construction of minimally intersecting filling pairs. They have proved that for $g \geq 3$, the number of complementary disks for a minimal filling pair is one, while for $g = 2$, this number is two. The mapping class group acts on the set of minimally intersecting filling pairs in a natural way.  In \cite{AougabMinimally}, the authors have studied the upper and lower bounds of the number of orbits of this action.

We define the length of a filling system by the following: If $\Omega=\{ \alpha_1, \dots, \alpha_n\}$ is a filling system of a closed hyperbolic surface $X$, then the length of the filling system $\Omega$ is $$\mathrm{length}_X(\Omega)=\sum\limits_{i=1}^nl_X(\alpha_i),$$ where $l_X(\alpha_i)$ denotes the length of the geodesic representative of $\alpha_i$ in the hyperbolic surface $X$. An interesting problem is to study the length of filling systems on hyperbolic surfaces. P. Schmutz \cite{Schaller1999} has shown that for a given filling system, there exists a unique hyperbolic structure where the minimum length of the filling pair occurs. In this direction, the problem of finding the explicit hyperbolic structure on the surface for which the length of a given filling system is minimum is open in general. Another interesting problem is to determine the shortest possible length of minimally intersecting filling pairs, as such pairs have no universal upper bound due to the collar lemma (Section 4.1 \cite{Buser}). Aougab-Huang provided a sharp lower bound on the length of minimally intersecting filling pairs for $g \geq 3$, and one of the main tools to prove it is the isoperimetric inequality for hyperbolic polygons~\cite{Bezdek}.

In \cite{SahaSep}, Saha-Sanki studied minimal filling pairs containing at least one separating curve and determined the shortest possible length of such pairs. Recall that a collection of simple closed curves on a punctured surface is called a \emph{filling pair} if the complement of their union is a disjoint union of disks or once-punctured disks. In a subsequent work~\cite{SahaSanki2024}, the authors investigated the length of minimal filling pairs on once-punctured hyperbolic surfaces. 

In genus two, the main obstacle to extending the Aougab-Huang approach for determining the shortest length of minimal filling pairs lies in the fact that a minimal filling pair has two complementary regions, and the number of edges in these regions is not uniquely determined. Note that for a filling pair $(\alpha, \beta)$, the number of edges of a region corresponds to the total number of $\alpha$- and $\beta$-arcs along its boundary. An Euler characteristic argument shows that the total number of sides of the two regions is $16$, leading three possible configurations: $\{4,12\}$, $\{6,10\}$, and $\{8,8\}$. We refer to filling pairs corresponding to these cases as being of type $\{4,12\}$, $\{6,10\}$, and $\{8,8\}$, respectively. As a first step toward understanding the structure of minimal fillings in genus two, we eliminate the possibility of the configuration of type $\{6,10\}$. In particular, we establish the following proposition.

\begin{theorem}\label{prop1.1}
   A minimally intersecting filling pair $(\alpha, \beta)$ of a closed surface $S_2$ of genus two is of either type $\{4, 12\}$ or type $\{8,8\}$ (equivalently, there does not exist a filling pair of type $\{6,10\}$ on $S_2$).
\end{theorem}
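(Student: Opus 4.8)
The plan is to argue entirely combinatorially, recovering the surface from a ribbon (fat) graph. First I would record the structure. If $(\alpha,\beta)$ is a minimally intersecting filling pair of $S_2$ then it meets at exactly four points, so $G=\alpha\cup\beta$ is a $4$-valent graph with $4$ vertices, $8$ edges and $2$ complementary disks, embedded in $S_2$. Since $\alpha$ and $\beta$ are simple closed curves crossing transversally, $\alpha$ is a cycle through all four vertices (a Hamiltonian $4$-cycle in the multigraph $G$), likewise $\beta$, and at each vertex the local picture is a transverse crossing, so in the cyclic (rotation) order the two $\alpha$-ends are opposite, as are the two $\beta$-ends. Equivalently, $S_2$ is obtained by gluing the two complementary polygons edge-to-edge, and along the boundary of each polygon the sides alternate between arcs of $\alpha$ and arcs of $\beta$. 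Assume for contradiction that the pair has type $\{6,10\}$, with hexagonal face $H$ and decagonal face $T$; then $\partial H$ has three $\alpha$-sides and three $\beta$-sides, while $\partial T$ has five of each.

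The heart of the argument is a short ``no loop'' observation: no arc of $\alpha$ (and, symmetrically, of $\beta$) can have both of its sides on $H$. Indeed, such an arc $e$ would occur twice among the three $\alpha$-sides of $\partial H$; reading $\partial H$ cyclically, the three $\alpha$-sides then appear in the order $e,e,e'$, so the two occurrences of $e$ are separated on one side by a single $\beta$-side $b_0$. Since the gluing producing the orientable surface $S_2$ is orientation-preserving, the two occurrences of $e$ enter $\partial H$ with opposite orientations, i.e.\ $\partial H$ contains the subword $e\,b_0\,e^{-1}$; tracing this as an edge-walk forces $b_0$ to start and end at the terminal vertex of $e$, so $b_0$ is a loop --- impossible, since every arc of $\beta$ joins two distinct intersection points. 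Hence every $\alpha$-arc meets $T$ on at least one side, and counting $\alpha$-sides ($3$ on $H$, $5$ on $T$, each arc contributing $0$, $1$ or $2$) forces exactly one $\alpha$-arc $a_\ast$ and one $\beta$-arc $b_\ast$ to have both sides on $T$, with the remaining six arcs bordering both faces. Consequently $\partial H$ is an alternating Eulerian circuit of the $6$-edge sub-multigraph $G'\subset G$ spanned by those six arcs. Now the three $\alpha$-arcs of $G'$ form a path between the two endpoints of $a_\ast$ (the $\alpha$-cycle minus $a_\ast$), and the three $\beta$-arcs form a path between the two endpoints of $b_\ast$; demanding that $G'$ have all degrees even (so that an Eulerian circuit exists) forces the odd-degree vertices of the two paths to coincide, i.e.\ $a_\ast$ and $b_\ast$ have the same pair of endpoints. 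Thus $a_\ast$ and $b_\ast$ are parallel edges, both with both sides on $T$.

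It remains to rule out this last configuration. The union of two Hamiltonian $4$-cycles on four vertices is one of two explicit multigraphs (the doubled $4$-cycle, or $K_4$ with a pair of opposite edges doubled), and in each the sub-multigraph $G'$ and the cyclic order in which $\beta$ visits the vertices are determined up to a small symmetry. For each case I would list the alternating Eulerian circuits of $G'$ that can play the role of $\partial H$ --- there are only a few, and the ``no loop'' principle applied to the two copies of $a_\ast$ (resp.\ $b_\ast$) on $\partial T$ prunes the list further --- and check whether any extends to a rotation system on $S_2$ in which the second face is genuinely a decagon. One finds that none does: every rotation system compatible with the transverse-crossing condition either produces more than two complementary regions (so the surface is not $S_2$), or produces two regions of size $\{4,12\}$ or $\{8,8\}$, never $\{6,10\}$. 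This contradiction completes the proof.

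I expect the real work to be concentrated in this last step: although the case list is short, each entry requires carefully tracking the rotation system and the resulting boundary words of $H$ and $T$, and one must justify rather than assume both the ``opposite orientations'' claim and the realizability of a prescribed alternating Eulerian circuit as $\partial H$. If a cleaner finish is preferred, an alternative is to bypass $G'$ altogether and split directly on the three possible cyclic orders of $\beta$ relative to $\alpha$, eliminating type $\{6,10\}$ in each by a direct face trace; in either route it is the ``no loop'' lemma that keeps the argument short.
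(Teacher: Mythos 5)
Your reduction is genuinely different from the paper's argument and its first two stages are correct and rather elegant. The paper proceeds by brute force: it realizes $\alpha\cup\beta$ as a fat graph on four $4$-valent vertices, enumerates all eight admissible rotation systems up to relabeling, and computes the boundary cycles of $\sigma_0\sigma_1$ in each case, observing that only $\{8,8\}$ and $\{4,12\}$ occur. Your ``no loop'' lemma (both occurrences of an $\alpha$-side on the hexagon would force the intervening $\beta$-side to be a loop, which is impossible since $\beta$ visits four distinct intersection points) and the ensuing count/parity argument (exactly one $\alpha$-arc $a_\ast$ and one $\beta$-arc $b_\ast$ lie entirely on the decagon, and the Eulerian condition on $G'$ forces them to share endpoints, hence be parallel) are sound, and they would prune the paper's eight cases down to a much smaller residual family.

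The gap is that the decisive step is asserted rather than proved. After the reduction, the statement ``every rotation system compatible with the transverse-crossing condition either produces more than two complementary regions \dots or produces two regions of size $\{4,12\}$ or $\{8,8\}$, never $\{6,10\}$'' is exactly the content of the theorem restricted to the surviving configurations, and ``One finds that none does'' is where the proof has to live. The parallel-edge configuration is \emph{not} vacuous: in both multigraphs you identify (the doubled $4$-cycle and $K_4$ with two opposite edges doubled) there genuinely exist a parallel $\alpha$-arc/$\beta$-arc pair, and at each vertex there remain two admissible interleavings of the $\alpha$- and $\beta$-ends in the rotation, so one must actually trace the faces of the finitely many surviving rotation systems and verify that none yields a hexagon--decagon splitting. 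You acknowledge this yourself (``I expect the real work to be concentrated in this last step''), but the work is not done, so as written the argument does not establish the theorem. To complete it you would either carry out that residual face-trace explicitly (at which point your proof becomes a shortened version of the paper's Cases 1--8 computation of $\sigma_0\sigma_1$), or find a further structural obstruction to the parallel configuration; the latter does not appear in the paper and is not supplied here.
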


The proof of Theorem~\ref{prop1.1} relies on the theory of fat graphs (a formal definition of fat graphs is provided in the next section). 

Let $\Sigma$ denote the set of all minimally intersecting filling pairs on $S_2$. The mapping class group $\mathrm{Mod}(S_2)$ acts on $\Sigma$ in the following way: For $f\in \mathrm{Mod}(S_2)$ and $(\alpha, \beta)\in \Sigma$, we have $f\cdot (\alpha, \beta)=(f(\alpha), f(\beta))$. In the course of proving Theorem~\ref{prop1.1}, we obtain the following theorem.

\begin{theorem}\label{thm1.2}
    There are exactly two (up to mapping class group action) minimal filling pairs in $S_2$.
\end{theorem}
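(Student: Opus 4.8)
The strategy is to translate the problem into an enumeration of fat graphs (ribbon graphs). A minimal filling pair $(\alpha,\beta)$ on $S_2$ has $\alpha\cap\beta$ consisting of $4$ points (since $i(\alpha,\beta)=4$ forces $2$ complementary disks by Euler characteristic: $V - E + F = 2 - 2g = -2$ with $V = i(\alpha,\beta)$, $E = 2i(\alpha,\beta)$, so $F = i(\alpha,\beta) - 2$, and minimality gives $i(\alpha,\beta) = 4$, $F = 2$). The union $\alpha\cup\beta$ is a $4$-valent graph $G$ embedded in $S_2$, and the embedding endows $G$ with a fat graph structure whose boundary components bound the two complementary disks. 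Conversely, any $4$-valent fat graph with two boundary components, genus two, which is "filling" by a pair of curves (i.e.\ the edges can be $2$-colored so that each color class forms a single closed curve crossing transversally at every vertex) gives a filling pair. So the plan is: (i) enumerate the relevant fat graphs, (ii) check which admit such a decomposition into two simple closed curves, (iii) show the mapping class group orbits correspond exactly to isomorphism classes of such colored fat graphs, and (iv) count them.

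First I would fix the combinatorial model: $G$ has $4$ vertices each of degree $4$ and $8$ edges. At each vertex the two strands of $\alpha$ and $\beta$ cross, so the cyclic (fat) order at each vertex alternates $\alpha,\beta,\alpha,\beta$. By Theorem \ref{prop1.1}, only types $\{4,12\}$ and $\{8,8\}$ occur, so the two boundary walks have length $4$ and $12$, or $8$ and $8$. I would then enumerate the possible fat graphs of each type: list all ways to glue up a $4$-valent graph on $4$ vertices with the alternating condition and the prescribed boundary-length data, up to graph isomorphism respecting the fat structure and the $\alpha/\beta$ coloring (the coloring is only defined up to swapping $\alpha\leftrightarrow\beta$). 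This is a finite check; the type $\{8,8\}$ case is likely the one done in \cite{AougabMinimally, Sanki2018} already (the standard genus-two minimal filling pair), and the type $\{4,12\}$ case should produce exactly one more model. For each surviving model I must verify it genuinely fills $S_2$ — that the genus is two and the complement is disks — which is automatic from the boundary data plus the Euler characteristic computation.

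The key reduction is the standard change-of-coordinates principle: two filling pairs $(\alpha,\beta)$ and $(\alpha',\beta')$ are in the same $\mathrm{Mod}(S_2)$-orbit if and only if there is an orientation-preserving homeomorphism of the corresponding fat-graph neighborhoods (equivalently, an isomorphism of the topological pairs $(S_2,\alpha\cup\beta)$ respecting the decomposition into the two curves). Here I would invoke (or prove) that a homeomorphism of $S_2$ carrying $\alpha\cup\beta$ to $\alpha'\cup\beta'$ and $\{\alpha,\beta\}$ to $\{\alpha',\beta'\}$ exists iff the labeled fat graphs are isomorphic — because the surface is recovered from the fat graph by thickening and capping the boundary circles with disks, and a fat-graph isomorphism extends over these disks. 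So orbit-counting reduces to counting isomorphism classes of the colored fat-graph models from the previous step. The main obstacle I anticipate is the enumeration itself: ruling out spurious fat graphs (those with the right vertex/boundary data but wrong genus, or those that do not decompose into two \emph{simple} closed curves rather than, say, one curve traversed twice or a curve plus a curve of the wrong type) requires care, and one must be sure the isomorphism-counting does not over- or under-count due to the $\alpha\leftrightarrow\beta$ symmetry and possible automorphisms of the fat graphs. Once the enumeration yields exactly one model of type $\{4,12\}$ and one of type $\{8,8\}$ (each with trivial or well-understood symmetry), the theorem follows, and as a byproduct one recovers Theorem \ref{prop1.1} since no type $\{6,10\}$ model survives.
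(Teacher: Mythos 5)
Your proposal is correct and follows essentially the same route as the paper: reduce to a finite enumeration of $4$-valent fat graphs on four vertices with alternating $\alpha/\beta$ structure and two boundary components (the paper carries this out explicitly in eight cases), observe that each of the types $\{4,12\}$ and $\{8,8\}$ yields a single model up to relabeling, and then convert a combinatorial isomorphism of the labeled complementary polygons into a homeomorphism of $S_2$ by reconstructing the surface from the polygons via the quotient maps. The only difference is one of completeness rather than method: you defer the actual enumeration and the symmetry bookkeeping, which is precisely the content the paper supplies.
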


The proof of Theorem~\ref{thm1.2} is based on fat graph theory and follows the approach of Aougab and Huang (see the proof of Lemma~2.3 in~\cite{AougabMinimally}). This theorem provides a significant advantage in minimizing the length of minimal filling pairs. We address the length problem in two distinct cases: the $\{4,12\}$ case and the $\{8,8\}$ case, which are presented in the following two theorems.

\begin{theorem}\label{thm1.3}
    Let $(\alpha, \beta)$ be a $\{8,8\}$ filling pair in a hyperbolic surface $X\in \mathcal{T}_2$. Then
    $$\mathrm{length}_X(\alpha,\beta) \geq 8\cdot\cosh^{-1}\left(\sqrt{2}+1\right)\approx 12.228567.$$ 
\end{theorem}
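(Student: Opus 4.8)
The plan is to reduce the length bound to an isoperimetric inequality for hyperbolic octagons, following the Aougab--Huang strategy. By Theorem~\ref{thm1.2}, a $\{8,8\}$ filling pair is unique up to the mapping class group, so its two complementary regions are both (combinatorial) octagons; since the mapping class group acts on Teichm\"uller space $\mathcal{T}_2$ preserving lengths, it suffices to bound $\mathrm{length}_X(\alpha,\beta)$ from below for the geodesic representatives on an arbitrary $X$. The key geometric observation is that each of the two complementary regions lifts to a convex geodesic octagon in $\mathbb{H}^2$ (the universal cover), whose perimeter is exactly the sum of the lengths of the $\alpha$- and $\beta$-arcs on its boundary. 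Summing over the two regions, each arc of $\alpha$ and $\beta$ is counted exactly twice (once from each side it bounds), so the total perimeter of the two octagons equals $2\cdot\mathrm{length}_X(\alpha,\beta)$. Meanwhile, the two octagons tile the surface $X$, so their total area is $\mathrm{area}(X) = 2\pi|\chi(S_2)| = 4\pi$ by Gauss--Bonnet, i.e.\ each octagon has area $2\pi$.

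The next step is to invoke the isoperimetric inequality for hyperbolic polygons: among all hyperbolic $n$-gons of fixed area, the regular one minimizes the perimeter (this is the Bezdek-type inequality~\cite{Bezdek} cited in the introduction as the main tool of Aougab--Huang). Applying this with $n=8$ and area $2\pi$: a regular hyperbolic octagon of area $2\pi$ has each interior angle equal to $\frac{(8-2)\pi - 2\pi}{8} = \frac{4\pi}{8} = \frac{\pi}{2}$, so its side length $s$ satisfies the standard right-angled regular octagon relation. Splitting the regular octagon into $16$ congruent right triangles with angles $\pi/8$, $\pi/4$, and $\pi/2$ at the center, a vertex, and an edge-midpoint respectively, the hyperbolic law of cosines for a right triangle gives $\cos(\pi/4) = \sin(\pi/8)\cosh(s/2)$... the cleaner route is the dual formula $\cosh(s/2) = \cos(\pi/8)/\sin(\pi/4)$, or directly one computes that the half-side satisfies $\cosh(\text{half-side})$ in terms of $\cosh^{-1}(\sqrt 2 + 1)$. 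Concretely, one should verify that the full side length of the area-$2\pi$ regular right-angled octagon is $s = 2\cosh^{-1}\!\left(\sqrt{2}+1\right)$; then each octagon has perimeter at least $8s$, the two together at least $16s$, hence
\[
2\cdot\mathrm{length}_X(\alpha,\beta) \;\geq\; 2\cdot 8s \;=\; 16\cosh^{-1}\!\left(\sqrt{2}+1\right),
\]
which rearranges to the claimed bound $\mathrm{length}_X(\alpha,\beta) \geq 8\cosh^{-1}(\sqrt 2 + 1) \approx 12.228567$.

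There are two points requiring care. First, one must justify that the geodesic representatives of $\alpha$ and $\beta$ still cut $X$ into two geodesic octagons; a priori, replacing the curves by their geodesic representatives could change the combinatorics or create bigons. This is handled by the standard fact that geodesic representatives of curves in minimal position remain in minimal position and fill the same way, so the complementary regions are isotopic to the original ones and each lifts to a convex geodesic polygon — here we use that a filling pair's complementary region, being a disk bounded by geodesic arcs meeting transversally, lifts to a convex geodesic polygon in $\mathbb{H}^2$ (convexity because the region is embedded and simply connected with geodesic sides). Second, one must confirm the edge-count: Theorem~\ref{prop1.1} together with the hypothesis pins down the $\{8,8\}$ type, and Theorem~\ref{thm1.2} guarantees there is essentially one such pair, so both regions genuinely have eight sides; the doubling argument for the perimeter then uses that every $\alpha$-arc and $\beta$-arc borders exactly two complementary regions (or the same region twice), in either case contributing its length twice to the total perimeter count.

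The main obstacle I expect is the explicit computation identifying the side length of the area-$2\pi$ regular hyperbolic octagon as $2\cosh^{-1}(\sqrt{2}+1)$ and, relatedly, citing or proving the isoperimetric inequality for hyperbolic $n$-gons in exactly the form needed (equality characterization for the regular polygon). The trigonometric computation is routine but must be done carefully via the right-triangle decomposition and the hyperbolic laws of sines/cosines; the isoperimetric input is quotable from~\cite{Bezdek} (as Aougab--Huang do for $g\geq 3$), so the real work is just the octagon trigonometry and the verification that the angle sum $2\pi$ corresponds to the area-$2\pi$ regular octagon via Gauss--Bonnet ($\mathrm{area} = (n-2)\pi - \sum \theta_i = 6\pi - 8\cdot\frac{\pi}{2} = 2\pi$, consistent).
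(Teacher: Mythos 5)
Your overall strategy is the same as the paper's (double the curve lengths to get the total perimeter of the two complementary octagons, then apply the isoperimetric inequality from~\cite{Bezdek}), but there is a genuine gap at the central step: you assert that since the two octagons tile $X$ and $\mathrm{area}(X)=4\pi$, ``each octagon has area $2\pi$.'' That does not follow, and it is false for a general $X$. Gauss--Bonnet gives $\mathrm{area}(P_i)=6\pi-\sum(\text{interior angles of }P_i)$, and the interior angles of each octagon are determined by the angles $\theta_1,\dots,\theta_4$ at the four intersection points of the geodesics. Tracing the boundary words, one octagon has angle sum $4\pi+\theta$ and the other $4\pi-\theta$, where $\theta=\theta_1+\theta_2-\theta_3-\theta_4$, so the areas are $2\pi-\theta$ and $2\pi+\theta$ with $\theta$ ranging over $(-2\pi,2\pi)$ depending on the hyperbolic structure. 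Only the sum of the areas is $4\pi$. Applying the isoperimetric inequality with the (unjustified) area $2\pi$ for each octagon therefore does not bound the perimeters of the actual regions.

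The paper closes exactly this gap: it applies the isoperimetric inequality to each octagon with its true area $2\pi\mp\theta$, writes the perimeter of the corresponding regular octagon explicitly as a function of $\theta$, and then proves (its Lemma~\ref{lem 1.2}, by a derivative computation) that the sum of the two regular-octagon perimeters is minimized at $\theta=0$, where both areas equal $2\pi$ and the regular octagon is right-angled. You would need to supply this one-variable minimization (equivalently, a convexity argument for the regular-octagon perimeter as a function of area) for your proof to go through. Separately, a minor slip: the side length of the area-$2\pi$ regular right-angled octagon is $\cosh^{-1}(\sqrt{2}+1)$, not $2\cosh^{-1}(\sqrt{2}+1)$ (from $\cosh(s/2)=\cos(\pi/8)/\sin(\pi/4)$ one gets $\cosh s=1+\sqrt{2}$); your final arithmetic implicitly uses the correct value, so the stated bound is right, but the intermediate claim should be corrected.
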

The proof of Theorem~\ref{thm1.3} is based on studying the angles at the intersection points of the geodesic representatives of the curves and applying the isoperimetric inequality for hyperbolic polygons.

In Theorem \ref{thm1.3}, we have seen that the minimal length of $\{8,8\}$ filling pair is given by a regular configuration. Surprisingly, this configuration does not provide the global minima of the length minimal filling pairs in genus two. We will see in the following theorem that the minima occurs in the $\{4,12\}$ case.
 
 \begin{theorem}\label{thm1.4}
    Let $(\alpha, \beta)$ be a $\{4,12\}$ filling pair in a hyperbolic surface $X\in \mathcal{T}_2$. Then
    $$\mathrm{length}_X(\alpha,\beta) \geq L_0,$$
    where $L_0=6 \cdot \cosh^{-1}(7/2)\approx 11.5490838.$
\end{theorem}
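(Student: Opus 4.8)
The plan is to mimic the strategy behind Theorem~\ref{thm1.3}, but adapted to the $\{4,12\}$ decomposition. By Theorem~\ref{thm1.2} we may work with the essentially unique $\{4,12\}$ filling pair, and by Theorem~\ref{prop1.1} together with the combinatorics of the fat graph, the complement of $\alpha \cup \beta$ consists of one geodesic $4$-gon $P_4$ and one geodesic $12$-gon $P_{12}$, whose $16$ edges are $\alpha$- and $\beta$-arcs (each arc of $\alpha$ and of $\beta$ appearing exactly twice among the boundary edges, once in each polygon or twice in one, as dictated by the fat graph structure). Write $n = \#(\alpha \cap \beta)$, which is $8$ in the minimal genus-two case (since the total number of sides is $2n = 16$). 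The total length we wish to bound is $\mathrm{length}_X(\alpha,\beta) = \tfrac12\big(\mathrm{perimeter}(P_4) + \mathrm{perimeter}(P_{12})\big)$, since each geodesic arc of $\alpha \cup \beta$ is counted twice over the two polygonal boundaries.

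The core inequality comes from the isoperimetric inequality for hyperbolic polygons~\cite{Bezdek}: among all hyperbolic $k$-gons of a fixed area, the regular one has the least perimeter, equivalently, a hyperbolic $k$-gon of area $A$ has perimeter at least $k \cdot e_k(A)$, where $e_k(A)$ is the edge length of the regular $k$-gon of area $A$. First I would record that $\mathrm{Area}(P_4) + \mathrm{Area}(P_{12}) = \mathrm{Area}(S_2) = 4\pi$ by Gauss--Bonnet, and that the angle sum around each vertex of the filling pair is $2\pi$, so the total angle contributed to the two polygons at their vertices is $2\pi n = 16\pi$; combined with the angle-defect formula this pins down the relationship between areas and gives $\mathrm{Area}(P_4) = 2\pi - (\text{sum of its four angles})$ and similarly for $P_{12}$. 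Then I would apply the isoperimetric inequality to each polygon, obtaining
\begin{equation}
\mathrm{length}_X(\alpha,\beta) \;\geq\; \tfrac12\Big(4\, e_4(A_4) + 12\, e_{12}(A_{12})\Big) \;=\; 2\, e_4(A_4) + 6\, e_{12}(A_{12}),
\end{equation}
where $A_4 + A_{12} = 4\pi$, and minimize the right-hand side over the admissible range of $(A_4, A_{12})$. Using the explicit trigonometric formula for the edge length of a regular $k$-gon (if a regular $k$-gon has area $A$, hence interior angle $\theta = \tfrac{(k-2)\pi - A}{k}$, its side length $e$ satisfies $\cosh e = \tfrac{\cos(2\pi/k) + \cos^2\theta}{\sin^2\theta}$, or the equivalent right-triangle identity $\cosh\tfrac{e}{2} = \tfrac{\cos(\pi/k)}{\sin(\theta/2)}$), one reduces to a one-variable calculus problem whose minimizer should turn out to force $A_{12} = 4\pi$ and $A_4 \to 0$ in the limit — but $A_4$ cannot actually be $0$, so the genuine constraint is that the angles at the four vertices of $P_4$ are the actual angles of the geodesic filling pair, which are bounded below because $\alpha$ and $\beta$ are geodesics meeting transversally. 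The claimed bound $L_0 = 6\cosh^{-1}(7/2)$ strongly suggests that the extremal configuration degenerates the $4$-gon to zero area, i.e. $A_{12} \to 4\pi$, giving interior angle $\theta_{12} = \tfrac{10\pi - 4\pi}{12} = \tfrac{\pi}{2}$, and then $\cosh\tfrac{e}{2} = \tfrac{\cos(\pi/12)}{\sin(\pi/4)}$; I would verify that $6 \cdot 2\cosh^{-1}\!\big(\cos(\pi/12)/\sin(\pi/4)\big)$ equals $6\cosh^{-1}(7/2)$ via a half-angle computation, which is the arithmetic heart of matching the stated constant.

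The main obstacle I anticipate is making the degeneration argument rigorous: the naive minimization over $A_4 \in (0, 4\pi)$ produces an infimum at $A_4 = 0$ that is not attained, so the inequality $\mathrm{length}_X(\alpha,\beta) \geq L_0$ must be argued as a strict-but-sharp-in-the-limit bound, or else one must show directly that $2\,e_4(A_4) + 6\,e_{12}(4\pi - A_4)$ is monotonically decreasing in $A_4$ on the relevant interval so that the worst case is the boundary value. Concretely, I would (i) show $\tfrac{d}{dA}\big[2 e_4(A) + 6 e_{12}(4\pi - A)\big] < 0$ for all $A \in (0, 4\pi)$ using the derivative formula $\tfrac{de_k}{dA} = -\tfrac{1}{k}\cdot \tfrac{\partial e_k/\partial\theta}{1} \cdot \tfrac{d\theta}{dA}$ together with the fact that the marginal cost per unit area of a regular $k$-gon decreases in $k$ (more sides = cheaper perimeter per area), and (ii) conclude that the infimum of the lower bound over configurations is $\lim_{A_4 \to 0^+}\big(2 e_4(A_4) + 6 e_{12}(4\pi - A_4)\big) = 0 + 6 e_{12}(4\pi) = 6\cosh^{-1}(7/2) = L_0$. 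Since any genuine geodesic $\{4,12\}$ filling pair has $A_4 > 0$ strictly, the inequality $\mathrm{length}_X(\alpha,\beta) \geq L_0$ holds (with equality only in the degenerate limit, not attained in $\mathcal{T}_2$), which is exactly the form of the statement. A secondary technical point is confirming, via the fat-graph classification from the proof of Theorem~\ref{thm1.2}, that the $12$-gon really can be taken to absorb essentially all the area — i.e. that there is no further combinatorial constraint forcing the four angles of $P_4$ to be uniformly bounded away from $\pi/2$ in a way that would raise the bound; but the explicit appearance of $7/2 = \cosh(\text{something about } \pi/12, \pi/4)$ in the statement confirms that the $\theta_{12} = \pi/2$ regime is the binding one.
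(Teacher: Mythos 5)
There is a genuine gap, and it is fatal to the strategy rather than a repairable detail: the arithmetic identity on which your whole reduction rests is false. You claim that the infimum of your isoperimetric lower bound, attained as $A_4\to 0$, is $6\,e_{12}(4\pi)=6\cosh^{-1}(7/2)$. But a regular hyperbolic $12$-gon of area $4\pi$ has interior angle $\pi/2$ and side length $e$ with $\cosh(e/2)=\cos(\pi/12)/\sin(\pi/4)=(\sqrt{3}+1)/2$, hence $\cosh e=2\cosh^2(e/2)-1=1+\sqrt{3}\approx 2.732$, not $7/2$. So the limit of your bound is $6\cosh^{-1}(1+\sqrt{3})\approx 9.98$, strictly below $L_0\approx 11.549$: the inequality $\mathrm{length}_X(\alpha,\beta)\geq 2e_4(A_4)+6e_{12}(A_{12})$ is correct but too weak to prove the theorem. (A smaller slip: $\#(\alpha\cap\beta)=4$ here, not $8$; the $16$ sides come from the $8$ arcs, each counted twice.) The conceptual reason the method of Theorem~\ref{thm: length} does not transfer is that applying the isoperimetric inequality to each polygon independently discards the gluing data: all four sides of the $4$-gon are arcs that also appear on the boundary of the $12$-gon, and the four angles at each intersection point must sum to $2\pi$, so the two polygons cannot be simultaneously regular in the degenerate area split your optimization selects. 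In the $\{8,8\}$ case the regular right-angled configuration happens to satisfy every constraint, which is why the naive bound is sharp there and not here.

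The paper's proof is structured precisely to retain those constraints. It invokes the existence and uniqueness of the hyperbolic structure $X_0$ minimizing the length of the (unique, by Theorem~\ref{thm1.2}) $\{4,12\}$ pair, and then uses two reflection symmetries of the configuration, which must fix $X_0$, to force equalities among the arc lengths and the intersection angles at the minimizer. This reduces the $12$-gon to eight copies of a single hyperbolic quadrilateral with sides $x$ and $l/2$, so that the total length becomes a fixed multiple of $x+l$; the minimum of $x+l$ subject to the hyperbolic trigonometric identities \eqref{triangle1}--\eqref{triangle3} is then located by Lagrange multipliers with computer assistance (Theorem~\ref{thm 3.3}), giving $x=\cosh^{-1}(7/2)$, $l=\cosh^{-1}(3/2)$ and hence $L_0$. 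Any rescue of your approach would have to build in at least the shared-edge constraint linking the perimeters of the two polygons, which is exactly the role the symmetrization step plays in the paper.
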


 The proof of Theorem~\ref{thm1.4} builds on these ideas and also uses numerical computations. Finally, we show that the lower bound obtained in Theorem~\ref{thm1.4} gives a global lower bound for any filling pair on a genus-two surface.

\section{Preliminaries}
In this section, we recall some definitions that will be frequently used throughout the paper.
We first recall the definition of graphs. Although it is not the standard one, it is equivalent and well-suited for defining fat graphs. We conclude this section with Lemma \ref{lem: fat graph boundary} that deals with boundary components of a fat graph.
\begin{definition}[Graph]
A graph $G$ is a triple $G=(E,\sim,\sigma_1)$, where
\begin{enumerate}
    \item $E=\{e_1,e_1^{-1},\dots, e_n,e_n^{-1}\}$ is a finite, non-empty set, called the set of directed edges.
    \item $\sim$ is an equivalence relation on $E$.
    \item $\sigma_1:E\to E$, a fixed point free involution, maps a directed edge to its reverse directed edge, i.e., $\sigma_1(e_i)=e_i^{-1},$ for all $e_i\in E$.
\end{enumerate}
\end{definition}

The equivalence relation $\sim$ is defined by $e_1\sim e_2$ if $e_1$ and $e_2$ have same initial vertex. In an ordinary language, $V=E/\sim$ is the set of all vertices and $E/\sigma_1$ is the set of un-directed edges of the graph. The number of edges incident at a vertex is called the \textit{degree} of the vertex (for more details, we refer to~\cite{Sanki2018}, Section 2). Now, we define fat graphs in the following.
\begin{definition}
A \textit{fat graph} is a quadruple $\Gamma=(E,\sim,\sigma_1,\sigma_0)$, where
\begin{enumerate}
    \item $G=(E,\sim,\sigma_1)$ is a graph.
    \item $\sigma_0$ is a permutation on E so that each cycle corresponds to a cyclic order on the set of oriented edges going out from a vertex.
\end{enumerate}
\end{definition}

\begin{definition}[Standard cycle]
    A closed path in a fat graph of even valency is called a \textit{standard cycle} if every two consecutive edges in the cycle are opposite to each other in the cyclic order on the set of edges incident at their common vertex.
\end{definition}

\textbf{Surface associated to a fat graph.}
Given a fat graph $\Gamma=(E, \sim, \sigma_1, \sigma_0)$, we construct an oriented topological surface $\Sigma(\Gamma)$ with boundary components as follows: Consider a closed disk corresponding to each vertex and a rectangle corresponding to each un-directed edge. Then identify the sides of the rectangles with the boundary of the disks according to the order of the edges incident to the vertex. See Figure~\ref{loc_pic_of_fat_graph} for a local picture. We define, the boundary of a fat graph $\Gamma$ as the boundary of the surface $\Sigma(\Gamma)$. We get a closed surface from $\Sigma(\Gamma)$ by attaching topological disks along the boundaries.

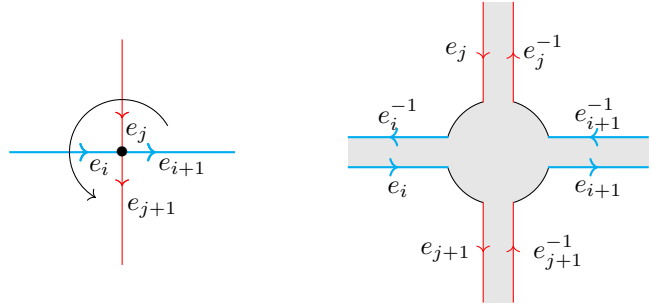
\begin{figure}[htbp]
\begin{center}
\begin{tikzpicture}[xscale=1,yscale=1]
\draw[thick, cyan, ->-=.35, ->-=.65] (-1.5,0)--(1.5,0);
\draw [red, ->-=.35, ->-=.65] (0,1.5)--(0,-1.5);
\draw[->-=1] (0.6062,.35) arc (30:240:.7);
\draw (-.3,-.2) node {\small$e_i$} (.8,-.2) node {\small$e_{i+1}$} (.2,.25) node {\small$e_j$} (.4,-.75) node {\small$e_{j+1}$};
\draw[] (0,0) node {\small$ \bullet$};

\fill[gray!20!white] (5,0) circle (.7cm);
\draw [] (5,0) circle (.7cm);
\fill[gray!20!white] (3,.2)--(4.34,.2)--(4.34,-.2)--(3,-.2)--cycle;
\draw [thick, cyan, -<-=.5] (3,.2)--(4.34,.2);
\draw [thick, cyan, ->-=.5] (3,-.2)--(4.34,-.2);
\fill [gray!20!white] (5.66,.2)--(7,.2)--(7,-.2)--(5.66,-.2)--cycle;
\draw  [thick, cyan, -<-=.5] (5.66,.2)--(7,.2);
\draw [thick, cyan, ->-=.5] (5.66,-.2)--(7,-.2);
\fill [gray!20!white] (4.8,.66)--(4.8,2)--(5.2,2)--(5.2,.66)--cycle;
\draw [red, -<-=.5] (4.8,.66)--(4.8,2);
\draw [red, ->-=.5] (5.2,.66)--(5.2,2);
\fill [gray!20!white] (4.8,-.66)--(4.8,-2)--(5.2,-2)--(5.2,-.66)--cycle;
\draw [red, ->-=.5] (4.8,-.66)--(4.8,-2);
\draw [red, -<-=.5] (5.2,-.66)--(5.2,-2);

    \draw (3.67,.5) node {\small$e_i^{-1}$} (3.67,-.5) node {\small$e_{i}$} (6.33,.5) node {\small$e_{i+1}^{-1}$} (6.33,-.5) node {\small$e_{i+1}$};
    \draw (5.6,1.33) node {\small$e_j^{-1}$} (4.45,1.33) node {\small$e_{j}$} (5.8,-1.33) node {\small$e_{j+1}^{-1}$} (4.3,-1.33) node {\small$e_{j+1}$};
\end{tikzpicture}
\end{center}
 \caption{Local picture of the surface obtained from a fat graph.} 
\label{loc_pic_of_fat_graph}
\end{figure}

\begin{lemma}\label{lem: fat graph boundary}
The number of boundary components of a fat graph $\Gamma=(E, \sim, \sigma_1, \sigma_0)$ is the same as the number of disjoint cycles in $\sigma_0\sigma_1$. Moreover, under suitable labeling, the cycles of $\sigma_0\sigma_1$ and the boundary words of $\Gamma$ coincide.
\end{lemma}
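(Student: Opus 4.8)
The plan is to trace the boundary of the surface $\Sigma(\Gamma)$ directly and read off the combinatorial rule governing how one boundary segment is followed by the next. First I would fix the building blocks: for each vertex $v$ take a closed disk $D_v$, for each undirected edge $\{e,e^{-1}\}$ take a rectangle $R_e=[0,1]\times[0,1]$, and glue the two short sides $\{0\}\times[0,1]$ and $\{1\}\times[0,1]$ to sub-arcs of $\partial D_v$ and $\partial D_w$ (where $v,w$ are the endpoints of the edge), in the cyclic order prescribed by $\sigma_0$ and compatibly with a fixed orientation, exactly as in Figure~\ref{loc_pic_of_fat_graph}. Then $\partial\Sigma(\Gamma)$ is a union of two families of arcs: the two \emph{long sides} $[0,1]\times\{0\}$ and $[0,1]\times\{1\}$ of each rectangle $R_e$, and, on each disk $D_v$, the arcs of $\partial D_v$ lying between two consecutive gluing regions. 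I would first observe that, after orienting $\partial\Sigma(\Gamma)$ by the boundary orientation, each long side of a rectangle inherits a direction, and one of the two long sides of $R_e$ is naturally labelled by the directed edge $e$ and the other by $e^{-1}$; this sets up a bijection between the long sides of all the rectangles and the set $E$ of directed edges.

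Next I would carry out the tracing argument. Starting on the long side labelled by a directed edge $e$ and moving in the direction of its terminal vertex $v=\sigma_1(e)$-head, the boundary curve enters $\partial D_v$, runs along exactly one arc of $D_v$ between consecutive rectangle attachments, and then emerges onto another rectangle long side. The key claim is that this successor side is the one labelled $\sigma_0(\sigma_1(e))$: reversing $e$ gives $\sigma_1(e)=e^{-1}$, which is an edge pointing \emph{out} of $v$; the arc of $\partial D_v$ traversed is precisely the one between $e^{-1}$ and the next outgoing edge in the cyclic order around $v$, which is $\sigma_0(e^{-1})$; and the boundary then runs back out along the long side labelled by that edge. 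Hence the ``next long side'' operation on $E$ is exactly the permutation $\sigma_0\sigma_1$. Consequently the boundary components of $\Sigma(\Gamma)$ are in bijection with the orbits of $\sigma_0\sigma_1$ acting on $E$, i.e. with the disjoint cycles of $\sigma_0\sigma_1$, which proves the first assertion.

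For the second assertion, I would note that the tracing above also records, in cyclic order, the sequence of directed edges labelling the long sides encountered along a given boundary component; by construction this cyclic sequence is precisely the cycle $(e,\ \sigma_0\sigma_1(e),\ (\sigma_0\sigma_1)^2(e),\dots)$ of $\sigma_0\sigma_1$. Declaring this to be the boundary word associated to that component (the ``suitable labeling'' in the statement) makes the cycles of $\sigma_0\sigma_1$ and the boundary words of $\Gamma$ coincide on the nose. I expect the only real care needed — the main obstacle — is the orientation bookkeeping: one must check once and for all, from the gluing convention in Figure~\ref{loc_pic_of_fat_graph}, that the successor map is $\sigma_0\sigma_1$ and not $\sigma_1\sigma_0$ or an inverse, and one must make sure the local analysis at a vertex is valid for vertices of arbitrary valence and for edges that are loops at a single vertex (where $e$ and $e^{-1}$ are glued to two distinct sub-arcs of the same $\partial D_v$). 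Once the local picture is pinned down, the global statement follows by concatenating the local moves around each boundary circle.
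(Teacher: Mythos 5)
Your proposal is correct and follows essentially the same route as the paper: both arguments trace the boundary of $\Sigma(\Gamma)$ through the disk--rectangle model and identify the ``next boundary segment'' operation as first applying $\sigma_1$ and then $\sigma_0$, so that boundary components correspond to orbits of $\sigma_0\sigma_1$ and the recorded edge labels give the boundary words. The only difference is one of presentation — the paper verifies the successor rule on a representative valency-four local picture, while you spell out the same tracing argument in general and flag the orientation and loop-edge bookkeeping explicitly.
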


\begin{proof}
    Let us view the fat graph locally (see Figure \ref{bdry_of_fat_graph_calculation}). For convenience, we assume that the valency of the graph is 4. The proof for the other valencies follows similarly. We adopt the convention for labeling the boundary words of $\Sigma(\Gamma)$ illustrated in Figure \ref{bdry_of_fat_graph_calculation}. More precisely, upon traversing along an edge $\alpha_i$, the boundary on the right is labeled as $\alpha_i$, whereas the boundary on the left is labeled by $\alpha_i^{-1}$.
    
    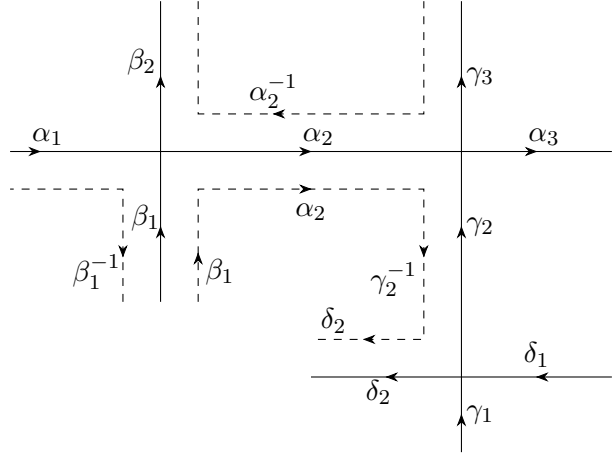
\begin{figure}[htbp]
    \begin{center}
    \begin{tikzpicture}[xscale=1,yscale=1]
     
    \draw (0,-2) -- (0,0) -- (4,0) -- (4,-3) -- (4,-4) ;
    \draw (-2,0) -- (0,0)--(0,2);
    \draw (4,2)--(4,0)--(6,0);
    \draw (2,-3)--(6,-3);
    \draw [-{Stealth[color=black]}] (-1.6,0)--(-1.59,0);
    \draw [-{Stealth[color=black]}] (5,0)--(5.01,0);
    \draw [-{Stealth[color=black]}] (2,0)--(2.01,0);
    \draw [-{Stealth[color=black]}] (0,-1)--(-0,-.99);
    \draw [-{Stealth[color=black]}] (0,1)--(-0,1.01);
    \draw [-{Stealth[color=black]}] (4,1)--(4,1.01);
    \draw [-{Stealth[color=black]}] (4,-1)--(4,-.99);
    \draw [-{Stealth[color=black]}] (4,-3.5)--(4,-3.49);
    \draw [-{Stealth[color=black]}] (3,-3)--(2.99,-3);
    \draw [-{Stealth[color=black]}] (5,-3)--(4.99,-3);
    \draw (-1.5,.2) node {$\alpha_1$} (2.1,.2) node {$\alpha_2$} (5.1,.2) node {$\alpha_3$} (-.2,-.88) node{$\beta_1$} (-.25,1.2) node{$\beta_2$} (4.25,-3.5) node{$\gamma_1$} (4.25,-1) node{$\gamma_2$} (4.25,1) node{$\gamma_3$} (5,-2.7) node{$\delta_1$} (2.9,-3.2) node{$\delta_2$};
    \draw[dashed] (.5,-2) -- (.5,-.5) -- (3.5,-.5) -- (3.5,-2.5) -- (2,-2.5);
    \draw[dashed] (-.5,-2) -- (-.5,-.5) -- (-2,-.5);
    \draw[dashed] (.5,2) -- (.5,.5) -- (3.5,.5) -- (3.5,2);
    \draw [-{Stealth[color=black]}] (-.5,-1.4)--(-.5,-1.41);
    \draw [-{Stealth[color=black]}] (.5,-1.35)--(.5,-1.34);
    \draw [-{Stealth[color=black]}] (2,-.5)--(2.01,-.5);
    \draw [-{Stealth[color=black]}] (3.5,-1.4)--(3.5,-1.41);
    \draw [-{Stealth[color=black]}] (2.7,-2.5)--(2.690,-2.5);
    \draw [-{Stealth[color=black]}] (1.5,.5)--(1.49,.5);
    \draw (.79,-1.6) node {$\beta_1$} (-.85,-1.6) node {$\beta_1^{-1}$} (2,-.8) node {$\alpha_2$} (3.1,-1.7) node {$\gamma_2^{-1}$} (2.3,-2.25) node {$\delta_2$} (1.5,.8) node {$\alpha_2^{-1}$};
    \end{tikzpicture}
    \end{center}
    \caption{The boundary of the surface is oriented so that the surface part lies on the right side.}
    \label{bdry_of_fat_graph_calculation}
    \end{figure}
    
    According to Figure \ref{bdry_of_fat_graph_calculation}, the permutation $\sigma_0$ is of the form 
    $$\sigma_0= (\alpha_2, \beta_2, \alpha_1^{-1}, \beta_1^{-1}) (\alpha_3, \gamma_3, \alpha_2^{-1}, \gamma_2^{-1}) (\delta_1^{-1}, \gamma_2, \delta_2, \gamma_1^{-1}) \cdots.$$
    
    Now, we evaluate the cycle containing $\beta_1$ in the permutation $\sigma_0\sigma_1$ and compare this cycle with the boundary word of $\Sigma(\Gamma)$ containing $\beta_1$. It can be seen that the cycle in $\sigma_0\sigma_1$ containing $\beta_1$ is $(\beta_1, \alpha_2, \gamma_2^{-1}, \cdots)$ (see Figure \ref{action_of_sigma0_inv_sigma_1}). Thus, the boundary word and cycle coincide, and this completes the proof.
    
    \begin{figure}[htbp]
    \begin{center}
    \begin{tikzpicture}[xscale=1,yscale=1]
    
    \draw (0.1,0) node{$\beta_1$};               
    \draw [-{Stealth[color=black]}] (.5,0)--(2,0);
    \draw (2.5,0) node {$\beta_1^{-1}$};
    \draw [-{Stealth[color=black]}] (3,0)--(4.5,0);
    \draw (4.9,0) node {$\alpha_2$};
    \draw [-{Stealth[color=black]}] (5.3,0)--(6.8,0);
    \draw (7.3,0) node {$\alpha_2^{-1}$};
    \draw [-{Stealth[color=black]}] (7.8,0)--(9.3,0);
    \draw (10,0) node {$\gamma_2^{-1} \dots$};  
    \draw (1.2,.2) node {$\sigma_1$} (3.75,.28) node {$\sigma_0$} (6,.2) node {$\sigma_1$} (8.5,.28) node {$\sigma_0$};
    \end{tikzpicture}
    \end{center}
    \caption{The action of $\sigma_0\sigma_1$.}
    \label{action_of_sigma0_inv_sigma_1}
    \end{figure}
\end{proof}

\section{Classification of minimal filling pairs up to mapping class group action}
In this section, first we classify all minimal filling pairs on $S_2$ in terms of the number of edges in the complementary polygons. The mapping class group $\mathrm{Mod}(S_2)$ acts on the set of all minimal filling pairs as follows: $f \cdot (\alpha,\beta)=(f(\alpha), f(\beta))$, where $f \in \mathrm{Mod}(S_2)$ and $(\alpha,\beta)$ is a minimal filling pair on $S_2$. We conclude by determining exactly the number of orbits of this action.

Let $(\alpha, \beta)$ be a minimal filling pair on $S_2$ and
$S_2\setminus (\alpha\cup \beta)=D_1 \sqcup D_2.$
By Euler characteristic formula, it follows that $i(\alpha,\beta)=4$. It is easy to see that $|D_1|+|D_2|=16$, where $|D_i|$ denotes the number of edges in $D_i, i=1,2$. As $|D_1|,|D_2| \in \{ 4,6,8,10,12\}$, $\{|D_1|,|D_2|\}=\{4,12\} \text{ or } \{6,10\} \text{ or } \{8,8\}$. We call a minimal filling pair as \emph{of type} $\{ 4,12 \}$ if the complementary disks contain 4 and 12 edges. A filling pair of type $\{6,10\} \text{ or } \{8,8\}$ is similarly defined. First, we prove that the case $\{6,10\}$ cannot occur.

\begin{theorem}
   A minimally intersecting filling pair $(\alpha, \beta)$ of a closed surface $S_2$ of genus two is of either type $\{4, 12\}$ or type $\{8,8\}$ (equivalently, there does not exist a filling pair of type $\{6,10\}$ on $S_2$).
\end{theorem}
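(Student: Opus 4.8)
The plan is to recast the statement as a finite enumeration of fat graphs. A minimal filling pair $(\alpha,\beta)$ on $S_2$ produces a $4$-valent graph $G$ whose vertices are the $i(\alpha,\beta)=4$ intersection points and whose edges are the $8$ subarcs into which these four points cut $\alpha$ and $\beta$; the embedding of $G$ in the oriented surface $S_2$ records a cyclic order of the four edges at each vertex, turning $G$ into a fat graph $\Gamma$. Since $\alpha$ and $\beta$ cross transversally, this cyclic order alternates between $\alpha$-edges and $\beta$-edges at every vertex; consequently the two $\alpha$-edges at a vertex are always opposite, so $\alpha$ is a standard cycle, and likewise $\beta$. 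As $V-E=4-8=-4$ and a fat graph gives an orientable surface, the closed surface obtained by capping $\Sigma(\Gamma)$ has Euler characteristic $-4+b$, where $b$ is the number of boundary components of $\Gamma$; thus it is $S_2$ exactly when $b=2$, in which case, by Lemma~\ref{lem: fat graph boundary}, the two complementary disks $D_1,D_2$ and their sizes $|D_1|,|D_2|$ are precisely the two cycles of $\sigma_0\sigma_1$ and their lengths. Traversing $\partial D_i$ one meets $\alpha$- and $\beta$-subarcs alternately, since every corner lies between one edge of each colour; hence each $|D_i|$ is even, and, as recorded above, the task reduces to ruling out the case $\{|D_1|,|D_2|\}=\{6,10\}$.

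The next step is to pin down the possible graphs $G$. Each of $\alpha$ and $\beta$ passes through all four vertices exactly once, so each is a Hamiltonian $4$-cycle on the vertex set; were $G$ simple its $8$ distinct edges would all be edges of $K_4$, which has only $6$, a contradiction. Hence $G$ has parallel edges, and these must be of opposite colours, because no two subarcs of a single curve join the same pair of vertices (the four intersection points being distinct). Writing the edge set of each $4$-cycle as the complement of a perfect matching of $K_4$ and using that the three perfect matchings of $K_4$ are pairwise disjoint, one finds that $\alpha$ and $\beta$ share either all four or exactly two vertex pairs; thus, up to isomorphism of edge-two-coloured graphs, there are precisely two candidates for $G$: the $4$-cycle with every edge doubled, and the graph obtained from a $4$-cycle by doubling one pair of opposite edges and adjoining the two diagonals.

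For each of these two graphs I would enumerate the fat structures. Alternation of colours leaves exactly two admissible cyclic orders at a $4$-valent vertex, so there are $2^4=16$ fat structures on each graph; using the automorphisms of the coloured graph, the involution swapping $\alpha$ with $\beta$, and reversal of the surface orientation, these collapse to a handful of cases. For each remaining case one writes down the permutation $\sigma_0$, computes the cycle decomposition of $\sigma_0\sigma_1$, discards the fat graphs with $b\neq 2$ (which correspond to genus $0$ or $1$), and reads off $\{|D_1|,|D_2|\}$ for the genus-two ones. The outcome is that every genus-two fat graph of this form has type $\{4,12\}$ or $\{8,8\}$; in fact there are exactly two up to equivalence, one of each type, so the same enumeration establishes Theorem~\ref{thm1.2}. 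In particular no such fat graph is of type $\{6,10\}$, which is the assertion.

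The only real difficulty is organizational: one must fix once and for all the conventions for $\sigma_0$ (the chosen cyclic order and the labelling of the sixteen half-edges together with their reverses) and exploit the symmetries of each coloured graph carefully, so that the $2\times16$ fat structures genuinely reduce to a short list rather than being checked individually. With the bookkeeping in place, each surviving case is nothing more than a routine cycle computation for a permutation on sixteen symbols.
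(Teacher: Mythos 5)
Your proposal is correct and follows essentially the same route as the paper: model $\alpha\cup\beta$ as a four-vertex fat graph with alternating colours at each vertex, enumerate the admissible fat structures, compute the cycles of $\sigma_0\sigma_1$ to read off the boundary polygons, and observe that only types $\{4,12\}$ and $\{8,8\}$ arise. The one point where you go slightly beyond the paper is in justifying completeness of the enumeration (classifying the two possible underlying edge-two-coloured graphs and counting the $2^4$ alternating vertex orderings before reducing by symmetry), whereas the paper simply lists its eight cases and asserts the list is exhaustive.
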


\begin{proof}
    We may regard the union $\alpha \cup \beta$ as a fat graph, where the vertices correspond to the points of intersection $\alpha \cap \beta$, the edges are the arcs of $\alpha$ and $\beta$ between consecutive intersection points, and the cyclic order at each vertex is determined by the orientation of the surface. We fix a direction and choose an initial arc of $\alpha$, labeling it $\alpha_1$. The remaining arcs of $\alpha$ are then labeled consecutively along this direction as $\alpha_2, \alpha_3,$ and $\alpha_4$. Similarly, we label the arcs of $\beta$ by $\beta_1, \dots, \beta_4$.

    To prove the proposition, we explicitly determine all possible fat graphs having two standard cycles and four vertices. These configurations will serve as a basis for characterizing all minimal filling pairs in $S_2$. We perform this analysis in eight different cases.

    \begin{enumerate}
        \item As a first case, we consider the fat graph in Figure \ref{fig: fatg case1}. The permutations $\sigma_0$ and the boundary word $\sigma_0^{-1}\sigma_1$ are given by 
    \begin{align*}
        \sigma_0 & = (\alpha_1, \beta_4^{-1}, \alpha_4^{-1}, \beta_1) (\alpha_2, \beta_3, \alpha_1^{-1}, \beta_2^{-1}) (\alpha_3, \beta_2, \alpha_2^{-1}, \beta_1^{-1}) (\alpha_4, \beta_3^{-1}, \alpha_3^{-1}, \beta_4),\\
        \sigma_0\sigma_1 & = (\alpha_1, \beta_2^{-1}, \alpha_2^{-1}, \beta_3, \alpha_3^{-1}, \beta_2, \alpha_2, \beta_1^{-1})(\alpha_1^{-1}, \beta_4^{-1}, \alpha_4, \beta_1, \alpha_3, \beta_4, \alpha_4^{-1}, \beta_3^{-1}).
    \end{align*}
    
    \begin{figure}[htbp]
        \centering
        \begin{tikzpicture}[xscale=.7,yscale=.7]
            \draw [-<-=.98, -<-=.93, -<-=.865, -<-=.785, -<-=.74] (0,1)--(6,1)--(6,10)--(0,10)--cycle;
            
            \draw (.32,9.1) node {\tiny$\alpha_1$}  (.32,3.5) node {\tiny$\alpha_2$} (.32,5.4) node {\tiny$\alpha_3$} (.33,7.5) node {\tiny$\alpha_4$};
            
    
            \draw [->-=.2, ->-=.75] (-1.5,8)--(1.5,8);
            \draw (1,8.3) node {\tiny $\beta_1$};

            \draw [-<-=.2, -<-=.75] (-1.5,2)--(1.5,2);
            \draw [-<-=.2, -<-=.75] (-1.5,4)--(1.5,4);
            \draw [->-=.2, ->-=.75] (-1.5,6)--(1.5,6);

            \draw (-1,2.3) node {\tiny $\beta_3$} (-1,4.3) node {\tiny $\beta_2$} (1,6.3) node {\tiny $\beta_4$};
    
            \draw (-1.5,8)--(-1.5,7)--(-.2,7);
            \draw (.2,7)--(1.5,7)--(1.5,6);
            \draw (-1.5,6)--(-3,6)--(-3,2)--(-1.5,2);
            \draw (1.5,2)--(1.5,3)--(.2,3);
            \draw (-.2,3)--(-1.5,3)--(-1.5,4);
            \draw (1.5,4)--(3,4)--(3,8)--(1.5,8);

              \draw [bend left, -<-=.5] ({9.5+2*cos(22.7)},{7.5+2*sin(22.7)}) to ({9.5+2*cos(67.5)},{7.5+2*sin(67.5)});
              \draw [bend left, -<-=.5] ({9.5+2*cos(67.5)},{7.5+2*sin(67.5)}) to ({9.5+2*cos(112.5)},{7.5+2*sin(112.5)});
              \draw [bend left, -<-=.5] ({9.5+2*cos(112.5)},{7.5+2*sin(112.5)}) to ({9.5+2*cos(157.5)},{7.5+2*sin(157.5)});
              \draw [bend left, -<-=.5] ({9.5+2*cos(157.5)},{7.5+2*sin(157.5)}) to ({9.5+2*cos(202.5)},{7.5+2*sin(202.5)});
              \draw [bend left, -<-=.5] ({9.5+2*cos(202.5)},{7.5+2*sin(202.5)}) to ({9.5+2*cos(247.5)}, {7.5+2*sin(247.5)});
              \draw [bend left, -<-=.5] ({9.5+2*cos(247.5)}, {7.5+2*sin(247.5)}) to ({9.5+2*cos(292.5)},{7.5+2*sin(292.5)});
              \draw [bend left, -<-=.5] ({9.5+2*cos(292.5)},{7.5+2*sin(292.5)}) to ({9.5+2*cos(337.5)},{7.5+2*sin(337.5)});
              \draw [bend left, -<-=.5] ({9.5+2*cos(337.5)},{7.5+2*sin(337.5)}) to ({9.5+2*cos(22.7)},{7.5+2*sin(22.7)});
    
              \draw  ({9.5+1.9*cos(90)},{7.5+1.9*sin(90)}) node {\tiny$\alpha_1$} ({9.5+2*cos(135)},{7.5+2*sin(135)}) node {\tiny$\beta_1^{-1}$} ({9.5+2*cos(180)},{7.5+2*sin(180)}) node {\tiny$\alpha_2$} ({9.5+2*cos(225)},{7.5+2*sin(225)}) node {\tiny$\beta_2$} ({9.5+2*cos(270)},{7.5+2*sin(270)}) node {\tiny$\alpha_3^{-1}$} ({9.5+2*cos(315)},{7.5+2*sin(315)}) node {\tiny$\beta_3$} ({9.5+2*cos(0)},{7.5+2*sin(0)}) node {\tiny$\alpha_2^{-1}$} ({9.5+2*cos(45)},{7.5+2*sin(45)}) node {\tiny$\beta_2^{-1}$};

              \draw [bend left, -<-=.5] ({9.5+2*cos(22.7)},{3+2*sin(22.7)}) to ({9.5+2*cos(67.5)},{3+2*sin(67.5)});
              \draw [bend left, -<-=.5] ({9.5+2*cos(67.5)},{3+2*sin(67.5)}) to ({9.5+2*cos(112.5)},{3+2*sin(112.5)});
              \draw [bend left, -<-=.5] ({9.5+2*cos(112.5)},{3+2*sin(112.5)}) to ({9.5+2*cos(157.5)},{3+2*sin(157.5)});
              \draw [bend left, -<-=.5] ({9.5+2*cos(157.5)},{3+2*sin(157.5)}) to ({9.5+2*cos(202.5)},{3+2*sin(202.5)});
              \draw [bend left, -<-=.5] ({9.5+2*cos(202.5)},{3+2*sin(202.5)}) to ({9.5+2*cos(247.5)}, {3+2*sin(247.5)});
              \draw [bend left, -<-=.5] ({9.5+2*cos(247.5)}, {3+2*sin(247.5)}) to ({9.5+2*cos(292.5)},{3+2*sin(292.5)});
              \draw [bend left, -<-=.5] ({9.5+2*cos(292.5)},{3+2*sin(292.5)}) to ({9.5+2*cos(337.5)},{3+2*sin(337.5)});
              \draw [bend left, -<-=.5] ({9.5+2*cos(337.5)},{3+2*sin(337.5)}) to ({9.5+2*cos(22.7)},{3+2*sin(22.7)});
    
              \draw  ({9.5+1.9*cos(90)},{3+1.9*sin(90)}) node {\tiny$\alpha_1^{-1}$} ({9.5+2*cos(135)},{3+2*sin(135)}) node {\tiny$\beta_3^{-1}$} ({9.5+2*cos(180)},{3+2*sin(180)}) node {\tiny$\alpha_4^{-1}$} ({9.5+2*cos(225)},{3+2*sin(225)}) node {\tiny$\beta_4$} ({9.5+2*cos(270)},{3+2*sin(270)}) node {\tiny$\alpha_3$} ({9.5+2*cos(315)},{3+2*sin(315)}) node {\tiny$\beta_1$} ({9.5+2*cos(0)},{3+2*sin(0)}) node {\tiny$\alpha_4$} ({9.5+2*cos(45)},{3+2*sin(45)}) node {\tiny$\beta_4^{-1}$};
        \end{tikzpicture}
        \caption{Case 1}
        \label{fig: fatg case1}
    \end{figure}
        \item \label{case1} The permutations $\sigma_0$ and $\sigma_0^{-1}\sigma_1$ are given by (see Figure \ref{fig: fatg case2})
    \begin{align*}
        \sigma_0 & = (\alpha_1, \beta_4^{-1}, \alpha_4^{-1}, \beta_1) (\alpha_2, \beta_2^{-1}, \alpha_1^{-1}, \beta_3) (\alpha_3, \beta_1^{-1}, \alpha_2^{-1}, \beta_2) (\alpha_4, \beta_3^{-1}, \alpha_3^{-1}, \beta_4),\\
        \sigma_0\sigma_1 & = (\alpha_1, \beta_3, \alpha_3^{-1}, \beta_1^{-1}) (\alpha_1^{-1}, \beta_4^{-1}, \alpha_4, \beta_1, \alpha_2^{-1}, \beta_2^{-1}, \alpha_3, \beta_4, \alpha_4^{-1}, \beta_3^{-1}, \alpha_2, \beta_2).
    \end{align*}

    \begin{figure}[htbp]
        \centering
        \begin{tikzpicture}[xscale=.7,yscale=.7]
            \draw [-<-=.68, -<-=.93, -<-=.865, -<-=.785, -<-=.74] (0,1)--(4,1)--(4,10)--(0,10)--cycle;
            
            \draw (.32,9.4) node {\tiny$\alpha_1$}  (.32,3.5) node {\tiny$\alpha_2$} (.32,5.4) node {\tiny$\alpha_3$} (.33,7.5) node {\tiny$\alpha_4$};
    
            \draw [->-=.2, ->-=.75] (-1.5,8)--(1.5,8);
            \draw [->-=.2, ->-=.75] (-1.5,2)--(1.5,2);
            \draw [->-=.2, ->-=.75] (-1.5,4)--(1.5,4);
            \draw [->-=.2, ->-=.75] (-1.5,6)--(1.5,6);
    
            \draw (1,8.3) node { \tiny$\beta_1$} (1,2.3) node { \tiny$\beta_3$} (1,4.3) node { \tiny$\beta_2$} (1,6.3) node {\tiny$\beta_4$};
             \draw (-1.5,8)--(-1.5,7)--(-.2,7);
             \draw (.2,7)--(1.5,7)--(1.5,6)--(-4.5,6)--(-4.5,3)--(-.2,3);
             \draw (.2,3)--(1.5,3)--(1.5,2)--(-3,2)--(-3,2.8);
             \draw (-3,3.2)--(-3,5)--(-.2,5);
             \draw (.2,5)--(1.5,5)--(1.5,4)--(-2,4)--(-2,4.8);
             \draw (-2,5.2)--(-2,5.8);
             \draw (-2,6.2)--(-2,9);
             \draw (-2,9)--(-.2,9);
             \draw (.2,9)--(1.5,9)--(1.5,8);
    
             \foreach \x in {1,2,...,12}
             {
             \draw [bend left, -<-=.5] ({7.5+2.5*cos(15+\x*30)}, {7+2.5*sin(15+\x*30)}) to ({7.5+2.5*cos(15+(\x+1)*30)}, {7+2.5*sin(15+(\x+1)*30)});
             }
    
             \draw ({7.5+2.6*cos(0)}, {7+2.6*sin(0)}) node {\tiny$\alpha_3$} ({7.5+2.7*cos(30)}, {7+2.7*sin(30)}) node {\tiny$\beta_2^{-1}$} ({7.5+2.6*cos(60)}, {7+2.6*sin(60)}) node {\tiny$\alpha_2^{-1}$} ({7.5+2.6*cos(90)}, {7+2.6*sin(90)}) node {\tiny$\beta_1$} ({7.5+2.6*cos(120)}, {7+2.6*sin(120)}) node {\tiny$\alpha_4$} ({7.5+2.7*cos(150)}, {7+2.7*sin(150)}) node {\tiny$\beta_4^{-1}$} ({7.5+2.6*cos(180)}, {7+2.6*sin(180)}) node {\tiny$\alpha_1^{-1}$} ({7.5+2.6*cos(210)}, {7+2.6*sin(210)}) node {\tiny$\beta_2$} ({7.5+2.6*cos(240)}, {7+2.6*sin(240)}) node {\tiny$\alpha_2$} ({7.5+2.7*cos(270)}, {7+2.7*sin(270)}) node {\tiny$\beta_3^{-1}$} ({7.5+2.6*cos(300)}, {7+2.6*sin(300)}) node {\tiny$\alpha_4^{-1}$} ({7.5+2.6*cos(330)}, {7+2.6*sin(330)}) node {\tiny$\beta_4$};
    
             \draw [-<-=.125, -<-=.375, -<-=.625, -<-=.875] (6.5,1.5)--(8.5,1.5)--(8.5,3.5)--(6.5,3.5)--cycle;
             \draw (7.3,3.75) node {\tiny$\alpha_1$} (7.7,1.2) node {\tiny$\alpha_3^{-1}$} (8.9,2.5) node {\tiny$\beta_3$} (6,2.5) node {\tiny$\beta_1^{-1}$}; 
        \end{tikzpicture}
        \caption{Case 2}
        \label{fig: fatg case2}
    \end{figure}

    \item The permutations $\sigma_0$ and $\sigma_0^{-1}\sigma_1$ are given by (see Figure \ref{fig: fatg case 3})
    \begin{align*}
        \sigma_0 & = (\alpha_1, \beta_4^{-1}, \alpha_4^{-1}, \beta_1) (\alpha_2, \beta_1^{-1}, \alpha_1^{-1}, \beta_2) (\alpha_3, \beta_4, \alpha_2^{-1}, \beta_3^{-1}) (\alpha_4, \beta_3, \alpha_3^{-1}, \beta_2^{-1}),\\
        \sigma_0\sigma_1 & = (\alpha_1, \beta_2, \alpha_4, \beta_1, \alpha_1^{-1}, \beta_4^{-1}, \alpha_2^{-1}, \beta_1^{-1})( \alpha_2, \beta_3^{-1}, \alpha_3^{-1}, \beta_4, \alpha_4^{-1}, \beta_3, \alpha_3, \beta_2^{-1}).
    \end{align*}

    \begin{figure}[htbp]
        \centering
        \begin{tikzpicture}[xscale=.7,yscale=.7]
            \draw [-<-=.98, -<-=.93, -<-=.865, -<-=.785, -<-=.74] (0,1)--(6,1)--(6,10)--(0,10)--cycle;
            
            \draw (.32,9.1) node {\tiny$\alpha_1$}  (.32,3.5) node {\tiny$\alpha_2$} (.32,5.4) node {\tiny$\alpha_3$} (.33,7.5) node {\tiny$\alpha_4$};
    
            \draw [->-=.2, ->-=.75] (-1.5,8)--(1.5,8);
            \draw (1,8.3) node {\tiny $\beta_1$};

            \draw [->-=.2, ->-=.75] (-1.5,2)--(1.5,2);
            \draw [-<-=.2, -<-=.75] (-1.5,4)--(1.5,4);
            \draw [-<-=.2, -<-=.75] (-1.5,6)--(1.5,6);

            \draw (1,2.3) node {\tiny $\beta_2$} (-1,4.3) node {\tiny $\beta_4$} (-1,6.3) node {\tiny $\beta_3$};

            \draw (1.5,8)--(3,8)--(3,3)--(.2,3);
            \draw (-.2,3)--(-1.5,3)--(-1.5,2);
            \draw (1.5,2)--(4.5,2)--(4.5,6)--(3.2,6);
            \draw (2.8,6)--(1.5,6);
            \draw (-1.5,6)--(-1.5,5)--(-.2,5);
            \draw (.2,5)--(1.5,5)--(1.5,4);
            \draw (-1.5,4)--(-3,4)--(-3,8)--(-1.5,8);

            \foreach \x in {1,2,...,8}
            {
            \draw [bend left,-<-=.5] ({9.5+2*cos(\x*45+22.5)}, {7.5+2*sin(\x*45+22.5)}) to ({9.5+2*cos((\x+1)*45+22.5)}, {7.5+2*sin((\x+1)*45+22.5)});
            }

            \draw ({9.5+1.9*cos(90)},{7.5+1.9*sin(90)}) node {\tiny$\alpha_1$} ({9.5+2*cos(135)},{7.5+2*sin(135)}) node {\tiny$\beta_1^{-1}$} ({9.5+2*cos(180)},{7.5+2*sin(180)}) node {\tiny$\alpha_2^{-1}$} ({9.5+2*cos(225)},{7.5+2*sin(225)}) node {\tiny$\beta_4^{-1}$} ({9.5+2*cos(270)},{7.5+2*sin(270)}) node {\tiny$\alpha_1^{-1}$} ({9.5+2*cos(315)},{7.5+2*sin(315)}) node {\tiny$\beta_1$} ({9.5+2*cos(0)},{7.5+2*sin(0)}) node {\tiny$\alpha_4$} ({9.5+2*cos(45)},{7.5+2*sin(45)}) node {\tiny$\beta_2$};

            \foreach \x in {1,2,...,8}
            {
            \draw [bend left,-<-=.5] ({9.5+2*cos(\x*45+22.5)}, {3+2*sin(\x*45+22.5)}) to ({9.5+2*cos((\x+1)*45+22.5)}, {3+2*sin((\x+1)*45+22.5)});
            }

            \draw  ({9.5+1.9*cos(90)},{3+1.9*sin(90)}) node {\tiny$\alpha_2$} ({9.5+2*cos(135)},{3+2*sin(135)}) node {\tiny$\beta_2^{-1}$} ({9.5+2*cos(180)},{3+2*sin(180)}) node {\tiny$\alpha_3$} ({9.5+2*cos(225)},{3+2*sin(225)}) node {\tiny$\beta_3$} ({9.5+2*cos(270)},{3+2*sin(270)}) node {\tiny$\alpha_4^{-1}$} ({9.5+2*cos(315)},{3+2*sin(315)}) node {\tiny$\beta_4$} ({9.5+2*cos(0)},{3+2*sin(0)}) node {\tiny$\alpha_3^{-1}$} ({9.5+2*cos(45)},{3+2*sin(45)}) node {\tiny$\beta_3^{-1}$};
        \end{tikzpicture}
        \caption{Case 3}
        \label{fig: fatg case 3}
    \end{figure}

    \item The permutations $\sigma_0$ and $\sigma_0^{-1}\sigma_1$ are given by 
    \begin{align*}
        \sigma_0 & = (\alpha_1, \beta_4^{-1}, \alpha_4^{-1}, \beta_1) (\alpha_2, \beta_3, \alpha_1^{-1}, \beta_2^{-1}) (\alpha_3, \beta_4, \alpha_2^{-1}, \beta_3^{-1}) (\alpha_4, \beta_1^{-1}, \alpha_3^{-1}, \beta_2),\\
        \sigma_0\sigma_1 & = (\alpha_1, \beta_2^{-1}, \alpha_4, \beta_1, \alpha_3^{-1}, \beta_4, \alpha_4^{-1}, \beta_1^{-1})( \alpha_1^{-1}, \beta_4, \alpha_2^{-1}, \beta_3, \alpha_3, \beta_2, \alpha_2, \beta_3^{-1}).
    \end{align*}

    \begin{figure}[htbp]
        \centering
        \begin{tikzpicture}[xscale=.7,yscale=.7]
            \draw [-<-=.98, -<-=.93, -<-=.865, -<-=.785, -<-=.74] (0,1)--(6,1)--(6,10)--(0,10)--cycle;
            
            \draw (.32,9.1) node {\tiny$\alpha_1$}  (.32,3.5) node {\tiny$\alpha_2$} (.32,5.4) node {\tiny$\alpha_3$} (.33,7.5) node {\tiny$\alpha_4$};
    
            \draw [->-=.2, ->-=.75] (-1.5,8)--(1.5,8);
            \draw (1,8.3) node {\tiny $\beta_1$};

            \draw [-<-=.2, -<-=.75] (-1.5,2)--(1.5,2);
            \draw [-<-=.2, -<-=.75] (-1.5,4)--(1.5,4);
            \draw [->-=.2, ->-=.75] (-1.5,6)--(1.5,6);

            \draw (-1,2.3) node {\tiny $\beta_3$} (-1,4.3) node {\tiny $\beta_4$} (1,6.3) node {\tiny $\beta_2$};

            \draw (-1.5,8)--(-3,8)--(-3,4)--(-1.5,4);
            \draw (1.5,4)--(1.5,3)--(.2,3);
            \draw (-.2,3)--(-1.5,3)--(-1.5,2);
            \draw (1.5,2)--(3,2)--(3,6)--(1.5,6);
            \draw (-1.5,6)--(-1.5,7)--(-.2,7);
            \draw (.2,7)--(1.5,7)--(1.5,8);

            \foreach \x in {1,2,...,8}
            {
            \draw [bend left,-<-=.5] ({9.5+2*cos(\x*45+22.5)}, {7.5+2*sin(\x*45+22.5)}) to ({9.5+2*cos((\x+1)*45+22.5)}, {7.5+2*sin((\x+1)*45+22.5)});
            }

            \draw ({9.5+1.9*cos(90)},{7.5+1.9*sin(90)}) node {\tiny$\alpha_1$} ({9.5+2*cos(135)},{7.5+2*sin(135)}) node {\tiny$\beta_1^{-1}$} ({9.5+2*cos(180)},{7.5+2*sin(180)}) node {\tiny$\alpha_4^{-1}$} ({9.5+2*cos(225)},{7.5+2*sin(225)}) node {\tiny$\beta_4$} ({9.5+2*cos(270)},{7.5+2*sin(270)}) node {\tiny$\alpha_3^{-1}$} ({9.5+2*cos(315)},{7.5+2*sin(315)}) node {\tiny$\beta_1$} ({9.5+2*cos(0)},{7.5+2*sin(0)}) node {\tiny$\alpha_4$} ({9.5+2*cos(45)},{7.5+2*sin(45)}) node {\tiny$\beta_2^{-1}$};

            \foreach \x in {1,2,...,8}
            {
            \draw [bend left,-<-=.5] ({9.5+2*cos(\x*45+22.5)}, {3+2*sin(\x*45+22.5)}) to ({9.5+2*cos((\x+1)*45+22.5)}, {3+2*sin((\x+1)*45+22.5)});
            }

            \draw  ({9.5+1.9*cos(90)},{3+1.9*sin(90)}) node {\tiny$\alpha_1^{-1}$} ({9.5+2*cos(135)},{3+2*sin(135)}) node {\tiny$\beta_3^{-1}$} ({9.5+2*cos(180)},{3+2*sin(180)}) node {\tiny$\alpha_2$} ({9.5+2*cos(225)},{3+2*sin(225)}) node {\tiny$\beta_2$} ({9.5+2*cos(270)},{3+2*sin(270)}) node {\tiny$\alpha_3$} ({9.5+2*cos(315)},{3+2*sin(315)}) node {\tiny$\beta_3$} ({9.5+2*cos(0)},{3+2*sin(0)}) node {\tiny$\alpha_2^{-1}$} ({9.5+2*cos(45)},{3+2*sin(45)}) node {\tiny$\beta_4^{-1}$};
        \end{tikzpicture}
        \caption{Case 4}
        \label{fig: fatg case 4}
    \end{figure}

    \item The permutations $\sigma_0$ and $\sigma_0^{-1}\sigma_1$ are given by (see Figure \ref{fig: fatg case5})
    \begin{align*}
        \sigma_0 & = (\alpha_1, \beta_4^{-1}, \alpha_4^{-1}, \beta_1) (\alpha_2, \beta_3^{-1}, \alpha_1^{-1}, \beta_4) (\alpha_3, \beta_2, \alpha_2^{-1}, \beta_1^{-1}) (\alpha_4, \beta_3, \alpha_3^{-1}, \beta_2^{-1}),\\
        \sigma_0\sigma_1 & = (\alpha_1, \beta_4, \alpha_4^{-1}, \beta_3, \alpha_1^{-1}, \beta_4^{-1}, \alpha_2, \beta_1)( \alpha_4, \beta_1, \alpha_3, \beta_2^{-1}, \alpha_2^{-1}, \beta_3^{-1}, \alpha_3^{-1}, \beta_2).
    \end{align*}

    \begin{figure}[htbp]
        \centering
        \begin{tikzpicture}[xscale=.7,yscale=.7]
            \draw [-<-=.98, -<-=.93, -<-=.865, -<-=.785, -<-=.74] (0,1)--(6,1)--(6,10)--(0,10)--cycle;
            
            \draw (.32,9.1) node {\tiny$\alpha_1$}  (.32,3.5) node {\tiny$\alpha_2$} (.32,5.4) node {\tiny$\alpha_3$} (.33,7.5) node {\tiny$\alpha_4$};
            
            \draw [->-=.2, ->-=.75] (-1.5,8)--(1.5,8);
            \draw (1,8.3) node {\tiny$\beta_1$};

            \draw [->-=.2, ->-=.75] (-1.5,2)--(1.5,2);
            \draw [-<-=.2, -<-=.75] (-1.5,4)--(1.5,4);
            \draw [-<-=.2, -<-=.75] (-1.5,6)--(1.5,6);

            \draw (1,2.3) node {\tiny$\beta_4$} (-1,4.3) node {\tiny$\beta_2$} (-1,6.3) node {\tiny$\beta_3$};

            \draw (-1.5,8)--(-3,8)--(-3,3)--(-.2,3);
            \draw (.2,3)--(1.5,3)--(1.5,2);
            \draw (-1.5,2)--(-2.5,2)--(-2.5,2.8);
            \draw (-2.5,3.2)--(-2.5,6)--(-1.5,6);
            \draw (1.5,6)--(1.5,5)--(.2,5);
            \draw (-.2,5)--(-1.5,5)--(-1.5,4);
            \draw (1.5,4)--(3,4)--(3,8)--(1.5,8);

            \foreach \x in {1,2,...,8}
            {
            \draw [bend left,-<-=.5] ({9.5+2*cos(\x*45+22.5)}, {7.5+2*sin(\x*45+22.5)}) to ({9.5+2*cos((\x+1)*45+22.5)}, {7.5+2*sin((\x+1)*45+22.5)});
            }

            \draw ({9.5+1.9*cos(90)},{7.5+1.9*sin(90)}) node {\tiny$\alpha_1$} ({9.5+2*cos(135)},{7.5+2*sin(135)}) node {\tiny$\beta_1$} ({9.5+2*cos(180)},{7.5+2*sin(180)}) node {\tiny$\alpha_2$} ({9.5+2*cos(225)},{7.5+2*sin(225)}) node {\tiny$\beta_4^{-1}$} ({9.5+2*cos(270)},{7.5+2*sin(270)}) node {\tiny$\alpha_1^{-1}$} ({9.5+2*cos(315)},{7.5+2*sin(315)}) node {\tiny$\beta_3$} ({9.5+2*cos(0)},{7.5+2*sin(0)}) node {\tiny$\alpha_4^{-1}$} ({9.5+2*cos(45)},{7.5+2*sin(45)}) node {\tiny$\beta_4$};

            \foreach \x in {1,2,...,8}
            {
            \draw [bend left,-<-=.5] ({9.5+2*cos(\x*45+22.5)}, {3+2*sin(\x*45+22.5)}) to ({9.5+2*cos((\x+1)*45+22.5)}, {3+2*sin((\x+1)*45+22.5)});
            }

            \draw  ({9.5+1.9*cos(90)},{3+1.9*sin(90)}) node {\tiny$\alpha_4^{-1}$} ({9.5+2*cos(135)},{3+2*sin(135)}) node {\tiny$\beta_2$} ({9.5+2*cos(180)},{3+2*sin(180)}) node {\tiny$\alpha_3^{-1}$} ({9.5+2*cos(225)},{3+2*sin(225)}) node {\tiny$\beta_3^{-1}$} ({9.5+2*cos(270)},{3+2*sin(270)}) node {\tiny$\alpha_2^{-1}$} ({9.5+2*cos(315)},{3+2*sin(315)}) node {\tiny$\beta_2^{-1}$} ({9.5+2*cos(0)},{3+2*sin(0)}) node {\tiny$\alpha_3$} ({9.5+2*cos(45)},{3+2*sin(45)}) node {\tiny$\beta_1$};
        \end{tikzpicture}
        \caption{Case 5}
        \label{fig: fatg case5}
    \end{figure}
    
    \item The permutations $\sigma_0$ and $\sigma_0^{-1}\sigma_1$ are given by (see Figure \ref{fig: fatg case6})
    \begin{align*}
        \sigma_0 & = (\alpha_1, \beta_4^{-1}, \alpha_4^{-1}, \beta_1) (\alpha_2, \beta_3^{-1}, \alpha_1^{-1}, \beta_4) (\alpha_3, \beta_1^{-1}, \alpha_2^{-1}, \beta_2) (\alpha_4, \beta_2^{-1}, \alpha_3^{-1}, \beta_3),\\
        \sigma_0\sigma_1 & = (\alpha_1, \beta_4, \alpha_4^{-1}, \beta_2^{-1}, \alpha_3, \beta_3, \alpha_1^{-1}, \beta_4^{-1}, \alpha_2, \beta_2, \alpha_3^{-1}, \beta_1^{-1}) (\alpha_4, \beta_1, \alpha_2^{-1}, \beta_3^{-1}).
    \end{align*}

    \begin{figure}[htbp]
        \centering
        \begin{tikzpicture}[xscale=.7,yscale=.7]
            \draw [-<-=.98, -<-=.93, -<-=.865, -<-=.785, -<-=.74] (0,1)--(4,1)--(4,10)--(0,10)--cycle;
            
            \draw (.32,9.4) node {\tiny$\alpha_1$}  (.32,3.5) node {\tiny$\alpha_2$} (.32,5.4) node {\tiny$\alpha_3$} (.33,7.5) node {\tiny$\alpha_4$};
    
            \draw [->-=.2, ->-=.75] (-1.5,8)--(1.5,8);
            \draw [->-=.2, ->-=.75] (-1.5,2)--(1.5,2);
            \draw [->-=.2, ->-=.75] (-1.5,4)--(1.5,4);
            \draw [->-=.2, ->-=.75] (-1.5,6)--(1.5,6);

            \draw (1,8.3) node {\tiny $\beta_1$} (1,2.3) node {\tiny $\beta_4$} (1,4.3) node {\tiny $\beta_2$} (1,6.3) node {\tiny $\beta_3$};
             \draw (-1.5,8)--(-3,8)--(-3,3)--(-.2,3);
             \draw (.2,3)--(1.5,3)--(1.5,2);
             \draw (-1.5,2)--(-2.5,2)--(-2.5,2.8);
             \draw (-2.5,3.2)--(-2.5,5)--(-.2,5);
             \draw (.2,5)--(1.5,5)--(1.5,6);
             \draw (-1.5,6)--(-1.5,7)--(-.2,7);
             \draw (.2,7)--(3,7)--(3,4)--(1.5,4);
             \draw (-1.5,4)--(-2,4)--(-2,9)--(-.2,9);
             \draw (.2,9)--(1.5,9)--(1.5,8);
    
             \foreach \x in {1,2,...,12}
             {
             \draw [bend left, -<-=.5] ({7.5+2.5*cos(15+\x*30)}, {7+2.5*sin(15+\x*30)}) to ({7.5+2.5*cos(15+(\x+1)*30)}, {7+2.5*sin(15+(\x+1)*30)});
             }
    
             \draw ({7.5+2.6*cos(0)}, {7+2.6*sin(0)}) node {\tiny$\alpha_1$} ({7.5+2.7*cos(30)}, {7+2.7*sin(30)}) node {\tiny$\beta_1^{-1}$} ({7.5+2.6*cos(60)}, {7+2.6*sin(60)}) node {\tiny$\alpha_3^{-1}$} ({7.5+2.6*cos(90)}, {7+2.6*sin(90)}) node {\tiny$\beta_2$} ({7.5+2.6*cos(120)}, {7+2.6*sin(120)}) node {\tiny$\alpha_2$} ({7.5+2.7*cos(150)}, {7+2.7*sin(150)}) node {\tiny$\beta_4^{-1}$} ({7.5+2.6*cos(180)}, {7+2.6*sin(180)}) node {\tiny$\alpha_1^{-1}$} ({7.5+2.6*cos(210)}, {7+2.6*sin(210)}) node {\tiny$\beta_3$} ({7.5+2.6*cos(240)}, {7+2.6*sin(240)}) node {\tiny$\alpha_3$} ({7.5+2.7*cos(270)}, {7+2.7*sin(270)}) node {\tiny$\beta_2^{-1}$} ({7.5+2.6*cos(300)}, {7+2.6*sin(300)}) node {\tiny$\alpha_4^{-1}$} ({7.5+2.6*cos(330)}, {7+2.6*sin(330)}) node {\tiny$\beta_4$};
    
             \draw [-<-=.125, -<-=.375, -<-=.625, -<-=.875] (6.5,1.5)--(8.5,1.5)--(8.5,3.5)--(6.5,3.5)--cycle;
             \draw (7.3,3.75) node {\tiny$\alpha_4$} (6,2.5) node {\tiny$\beta_3^{-1}$} (7.7,1.2) node {\tiny$\alpha_2^{-1}$} (8.9,2.5) node {\tiny$\beta_1$} ; 
        \end{tikzpicture}
        \caption{Case 6}
        \label{fig: fatg case6}
    \end{figure}

    \item The permutations $\sigma_0$ and $\sigma_0^{-1}\sigma_1$ are given by (see Figure \ref{fig: fatg case7})
    \begin{align*}
        \sigma_0 & = (\alpha_1, \beta_4^{-1}, \alpha_4^{-1}, \beta_1) (\alpha_2, \beta_2^{-1}, \alpha_1^{-1}, \beta_3) (\alpha_3, \beta_3^{-1}, \alpha_2^{-1}, \beta_4) (\alpha_4, \beta_1^{-1}, \alpha_3^{-1}, \beta_2),\\
        \sigma_0\sigma_1 & = (\alpha_1, \beta_4, \alpha_2^{-1}, \beta_2^{-1}, \alpha_4, \beta_1, \alpha_3^{-1}, \beta_4^{-1}, \alpha_2, \beta_3, \alpha_4^{-1}, \beta_1^{-1}) (\alpha_1^{-1}, \beta_3^{-1}, \alpha_3, \beta_2).
    \end{align*}

    \begin{figure}[htbp]
        \centering
        \begin{tikzpicture}[xscale=.7,yscale=.7]
            \draw [-<-=.98, -<-=.93, -<-=.865, -<-=.785, -<-=.74] (0,1)--(4,1)--(4,10)--(0,10)--cycle;
            
            \draw (.32,9.4) node {\tiny$\alpha_1$}  (.32,3.5) node {\tiny$\alpha_2$} (.32,5.4) node {\tiny$\alpha_3$} (.33,7.5) node {\tiny$\alpha_4$};

            \draw [->-=.2, ->-=.75] (-1.5,8)--(1.5,8);
            \draw [->-=.2, ->-=.75] (-1.5,2)--(1.5,2);
            \draw [->-=.2, ->-=.75] (-1.5,4)--(1.5,4);
            \draw [->-=.2, ->-=.75] (-1.5,6)--(1.5,6);

            \draw (1,8.3) node {\tiny $\beta_1$} (1,2.3) node {\tiny $\beta_3$} (1,4.3) node {\tiny $\beta_4$} (1,6.3) node {\tiny $\beta_2$};
             \draw (-1.5,8)--(-3,8)--(-3,5)--(-.2,5);
             \draw (.2,5)--(1.5,5)--(1.5,4);
             \draw (-1.5,4)--(-1.5,3)--(-.2,3);
             \draw (.2,3)--(1.5,3)--(1.5,2);
             \draw (-1.5,2)--(-1.5,1.5)--(-.2,1.5);
             \draw (.2,1.5)--(3,1.5)--(3,6)--(1.5,6);
             \draw (-1.5,6)--(-1.5,7)--(-.2,7);
             \draw (.2,7)--(1.5,7)--(1.5,8);
    
             \foreach \x in {1,2,...,12}
             {
             \draw [bend left, -<-=.5] ({7.5+2.5*cos(15+\x*30)}, {7+2.5*sin(15+\x*30)}) to ({7.5+2.5*cos(15+(\x+1)*30)}, {7+2.5*sin(15+(\x+1)*30)});
             }
    
             \draw ({7.5+2.6*cos(0)}, {7+2.6*sin(0)}) node {\tiny$\alpha_1$} ({7.5+2.7*cos(30)}, {7+2.7*sin(30)}) node {\tiny$\beta_1^{-1}$} ({7.5+2.6*cos(60)}, {7+2.6*sin(60)}) node {\tiny$\alpha_4^{-1}$} ({7.5+2.6*cos(90)}, {7+2.6*sin(90)}) node {\tiny$\beta_3$} ({7.5+2.6*cos(120)}, {7+2.6*sin(120)}) node {\tiny$\alpha_2$} ({7.5+2.7*cos(150)}, {7+2.7*sin(150)}) node {\tiny$\beta_4^{-1}$} ({7.5+2.6*cos(180)}, {7+2.6*sin(180)}) node {\tiny$\alpha_3^{-1}$} ({7.5+2.6*cos(210)}, {7+2.6*sin(210)}) node {\tiny$\beta_1$} ({7.5+2.6*cos(240)}, {7+2.6*sin(240)}) node {\tiny$\alpha_4$} ({7.5+2.7*cos(270)}, {7+2.7*sin(270)}) node {\tiny$\beta_2^{-1}$} ({7.5+2.6*cos(300)}, {7+2.6*sin(300)}) node {\tiny$\alpha_2^{-1}$} ({7.5+2.6*cos(330)}, {7+2.6*sin(330)}) node {\tiny$\beta_4$};
    
             \draw [-<-=.125, -<-=.375, -<-=.625, -<-=.875] (6.5,1.5)--(8.5,1.5)--(8.5,3.5)--(6.5,3.5)--cycle;
             \draw (7.3,3.75) node {\tiny$\alpha_1^{-1}$} (6,2.5) node {\tiny$\beta_2$} (7.7,1.2) node {\tiny$\alpha_3$} (8.9,2.5) node {\tiny$\beta_3^{-1}$}; 
        \end{tikzpicture}
        \caption{Case 7}
        \label{fig: fatg case7}
    \end{figure}

    \item \label{case7} The permutations $\sigma_0$ and $\sigma_0^{-1}\sigma_1$ are given by (see Figure \ref{fig: fatg case 8})
    \begin{align*}
        \sigma_0 & = (\alpha_1, \beta_4^{-1}, \alpha_4^{-1}, \beta_1) (\alpha_2, \beta_1^{-1}, \alpha_1^{-1}, \beta_2) (\alpha_3, \beta_3^{-1}, \alpha_2^{-1}, \beta_4) (\alpha_4, \beta_2^{-1}, \alpha_3^{-1}, \beta_3),\\
        \sigma_0\sigma_1 & =  (\alpha_1, \beta_2, \alpha_3^{-1}, \beta_3^{-1}, \alpha_4, \beta_1, \alpha_1^{-1}, \beta_4^{-1}, \alpha_3, \beta_3, \alpha_2^{-1}, \beta_1^{-1})(\alpha_2, \beta_4, \alpha_4^{-1}, \beta_2^{-1}).
    \end{align*}

    \begin{figure}[htbp]
        \centering
        \begin{tikzpicture}[xscale=.7,yscale=.7]
            \draw [-<-=.98, -<-=.93, -<-=.865, -<-=.785, -<-=.74] (0,1)--(4,1)--(4,10)--(0,10)--cycle;
            
            \draw (.32,9.4) node {\tiny$\alpha_1$}  (.32,3.5) node {\tiny$\alpha_2$} (.32,5.4) node {\tiny$\alpha_3$} (.33,7.5) node {\tiny$\alpha_4$};
            
            \draw [->-=.2, ->-=.75] (-1.5,8)--(1.5,8);
            \draw [->-=.2, ->-=.75] (-1.5,2)--(1.5,2);
            \draw [->-=.2, ->-=.75] (-1.5,4)--(1.5,4);
            \draw [->-=.2, ->-=.75] (-1.5,6)--(1.5,6);

            \draw (1,8.3) node {\tiny $\beta_1$} (1,2.3) node {\tiny $\beta_2$} (1,4.3) node {\tiny $\beta_4$} (1,6.3) node {\tiny $\beta_3$};
             \draw (-1.5,8)--(-3,8)--(-3,5)--(-.2,5);
             \draw (.2,5)--(1.5,5)--(1.5,4);
             \draw (-1.5,4)--(-1.5,3)--(-.2,3);
             \draw(.2,3)--(2,3)--(2,6)--(1.5,6);
             \draw (-1.5,6)--(-1.5,7)--(-.2,7);
             \draw (.2,7)--(3,7)--(3,2)--(1.5,2);
             \draw (-1.5,2)--(-2,2)--(-2,4.8);
             \draw (-2,5.2)--(-2,7.8);
             \draw (-2,8.2)--(-2,9)--(-.2,9);
             \draw (.2,9)--(1.5,9)--(1.5,8);
    
             \foreach \x in {1,2,...,12}
             {
             \draw [bend left, -<-=.5] ({7.5+2.5*cos(15+\x*30)}, {7+2.5*sin(15+\x*30)}) to ({7.5+2.5*cos(15+(\x+1)*30)}, {7+2.5*sin(15+(\x+1)*30)});
             }
    
             \draw ({7.5+2.6*cos(0)}, {7+2.6*sin(0)}) node {\tiny$\alpha_1$} ({7.5+2.7*cos(30)}, {7+2.7*sin(30)}) node {\tiny$\beta_1^{-1}$} ({7.5+2.6*cos(60)}, {7+2.6*sin(60)}) node {\tiny$\alpha_2^{-1}$} ({7.5+2.6*cos(90)}, {7+2.6*sin(90)}) node {\tiny$\beta_3$} ({7.5+2.6*cos(120)}, {7+2.6*sin(120)}) node {\tiny$\alpha_3$} ({7.5+2.7*cos(150)}, {7+2.7*sin(150)}) node {\tiny$\beta_4^{-1}$} ({7.5+2.6*cos(180)}, {7+2.6*sin(180)}) node {\tiny$\alpha_1^{-1}$} ({7.5+2.6*cos(210)}, {7+2.6*sin(210)}) node {\tiny$\beta_1$} ({7.5+2.6*cos(240)}, {7+2.6*sin(240)}) node {\tiny$\alpha_4$} ({7.5+2.7*cos(270)}, {7+2.7*sin(270)}) node {\tiny$\beta_3^{-1}$} ({7.5+2.6*cos(300)}, {7+2.6*sin(300)}) node {\tiny$\alpha_3^{-1}$} ({7.5+2.6*cos(330)}, {7+2.6*sin(330)}) node {\tiny$\beta_2$};
    
             \draw [-<-=.125, -<-=.375, -<-=.625, -<-=.875] (6.5,1.5)--(8.5,1.5)--(8.5,3.5)--(6.5,3.5)--cycle;
             \draw (7.3,3.75) node {\tiny$\alpha_2$} (6,2.5) node {\tiny$\beta_2^{-1}$} (7.7,1.2) node {\tiny$\alpha_4^{-1}$} (8.9,2.5) node {\tiny$\beta_4$}; 
        \end{tikzpicture}
        \caption{Case 8}
        \label{fig: fatg case 8}
    \end{figure}
    \end{enumerate}
    One can see that this is a complete list of fat graphs with the required conditions. This completes the proof of the proposition.
\end{proof}


\begin{note}\label{note2.3}
    The boundary words for cases of type $\{4,12\}$ are the same modulo relabeling of the edges. For example, if in Case \ref{case1}, we relabel the edges $\alpha_i$ by $\alpha_{i+1}$ (modulo 4) and $\beta_i$ by $\beta_{i+1}$ (modulo 4), then the boundary word is the same as in Case \ref{case7}. We can check that the same holds for filling pairs of type $\{8,8\}$. 
\end{note}

The observation made in Note \ref{note2.3} yields the following.
\begin{theorem}
    There are exactly two (up to mapping class group action) minimal filling pairs in $S_2$.
\end{theorem}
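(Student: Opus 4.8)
The plan is to upgrade the combinatorial classification carried out in the proof of Theorem~\ref{prop1.1} to a statement about orbits, using the standard dictionary between filling pairs and fat graphs. To a minimal filling pair $(\alpha,\beta)$ on $S_2$ one associates the \emph{bicolored} fat graph $\Gamma(\alpha,\beta)$ whose underlying graph is $\alpha\cup\beta$ (vertices $=\alpha\cap\beta$, edges $=$ the arcs of $\alpha$ and of $\beta$, each flagged by its color), with cyclic orders at the vertices induced by the orientation of $S_2$. Since $\alpha,\beta$ are simple closed curves in minimal position with $i(\alpha,\beta)=4$, the graph $\Gamma(\alpha,\beta)$ is $4$-valent with $4$ vertices and $8$ edges, the $\alpha$- and $\beta$-edges alternate around each vertex, so that each color class is a single standard cycle; moreover, because $S_2\setminus(\alpha\cup\beta)=D_1\sqcup D_2$ is a union of two disks, $S_2$ is obtained from $\Sigma(\Gamma(\alpha,\beta))$ by capping its two boundary components with disks, with the Euler characteristic count $4-8+2=-2$ confirming the genus.

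First I would record that this assignment induces a bijection between $\mathrm{Mod}(S_2)$-orbits of minimal filling pairs on $S_2$ and isomorphism classes of bicolored fat graphs of the above combinatorial type. In one direction, if $f\in\mathrm{Mod}(S_2)$ sends $(\alpha,\beta)$ to $(\alpha',\beta')$, then a representative homeomorphism carrying $\alpha\cup\beta$ onto $\alpha'\cup\beta'$ restricts to a color-preserving isomorphism $\Gamma(\alpha,\beta)\to\Gamma(\alpha',\beta')$. Conversely, a color-preserving isomorphism of two such fat graphs matches up their boundary words by Lemma~\ref{lem: fat graph boundary}, hence extends to a homeomorphism $\Sigma(\Gamma(\alpha,\beta))\to\Sigma(\Gamma(\alpha',\beta'))$ which, by the Alexander trick applied to each of the two capping disks, extends further to a homeomorphism $S_2\to S_2$ taking $\alpha\mapsto\alpha'$ and $\beta\mapsto\beta'$. (If one uses the extended mapping class group, or treats the pair as unordered, one simply also permits orientation-reversing isomorphisms and/or a color swap; this does not affect the final count.)

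Next I would invoke the classification from the proof of Theorem~\ref{prop1.1}: every bicolored fat graph arising as $\Gamma(\alpha,\beta)$ for a minimal filling pair on $S_2$ is, up to the auxiliary choice of orientation and initial $\alpha$-edge used to write down its permutations, one of the eight listed in Cases~1--8, of which four are of type $\{8,8\}$ and four of type $\{4,12\}$. By Note~\ref{note2.3}, the cyclic relabeling $\alpha_i\mapsto\alpha_{i+1}$, $\beta_i\mapsto\beta_{i+1}$ (mod $4$) is a fat-graph isomorphism identifying all four $\{4,12\}$ cases with one another, and likewise all four $\{8,8\}$ cases; hence there are at most two isomorphism classes, and therefore at most two orbits. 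For the reverse inequality, note that the unordered pair $\{|D_1|,|D_2|\}$ is a $\mathrm{Mod}(S_2)$-invariant, so a type $\{4,12\}$ pair is never equivalent to a type $\{8,8\}$ pair; and each type is realized by an honest minimal filling pair (e.g.\ the curves read off from Cases~1 and~2, which in each case are simple closed curves filling $S_2$ with the minimal intersection number $4$). Therefore there are exactly two orbits.

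The step I expect to be the main obstacle is the dictionary of the second paragraph: making precise the equivalence relation on fat graphs that corresponds to $\mathrm{Mod}(S_2)$-equivalence — in particular, verifying that the eight explicit permutations, each written relative to an auxiliary basepoint and orientation, are being compared up to exactly those isomorphisms realized by surface homeomorphisms — together with justifying the extension across the capping disks. The remaining combinatorics has already been carried out in the proof of Theorem~\ref{prop1.1} and in Note~\ref{note2.3}.
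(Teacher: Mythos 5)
Your proposal is correct and follows essentially the same route as the paper: both arguments reduce the orbit count to the case classification from Theorem~\ref{prop1.1} together with Note~\ref{note2.3}, and both produce the identifying homeomorphism from a matching of boundary words (you cap the ribbon surface with the two complementary disks via the Alexander trick, while the paper equivalently descends a homeomorphism of the two complementary polygons through the quotient maps). The type $\{|D_1|,|D_2|\}$ being a mapping class group invariant, which you make explicit, is the same reason the paper gives for the two classes being distinct.
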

\begin{proof}
    It follows from Proposition~\ref{prop1.1} that there are at least two minimal filling pairs on $S_2$, namely one of type $\{4,12\}$ and one of type $\{8,8\}$. We show that there exist unique filling pairs, each of types $\{4,12\}$ and $\{8,8\}$, up to homeomorphism. 

    We prove the uniqueness of the filling pairs of type $\{4,12\}$. The uniqueness of the other types follows similarly. Let $(\alpha, \beta)$ and $(\alpha', \beta')$ be two pairs of fillings in $S_2$ of type $\{4,12\}$. Suppose $S_2\setminus (\alpha\cup \beta)=D_1 \bigsqcup D_2, S_2\setminus (\alpha'\cup \beta')=D_1' \bigsqcup D_2', |D_1|=|D_1'|=4, |D_2|=|D_2'|= 12$. We label the subarcs of the curves in each of the filling pairs in such a way that the boundary words of the complementary disks are the same modulo prime symbol (this is possible by Note~\ref{note2.3}). Let $q: D_1 \bigsqcup D_2 \to S_2$ and $q': D_1' \bigsqcup D_2' \to S_2$ be the quotient maps and $f: D_1 \bigsqcup D_2 \to D_1' \bigsqcup D_2'$ be a homeomorphism mapping edges of $D_1 \bigsqcup D_2$ to the corresponding edges of $D_1' \bigsqcup D_2'$ (e.g. $\alpha_1 \mapsto \alpha_1'$ and so on). Then the homeomorphism of the surface $S_2$ mapping $(\alpha, \beta)$ to $(\alpha', \beta')$ is given by $q'\circ f \circ q^{-1}$.
\end{proof}

\section{Length of minimal filling pairs in genus two}
In this section, we find a lower bound for the length of minimal filling pairs on a genus-two surface. We do this into two parts. First, we determine the minimal length of filling pairs of type $\{8,8\}$ in the following theorem. We conclude this section by minimizing the length of filling pairs of type $\{4,12\}$ in Theorem \ref{thm 4.3}.

\begin{theorem}\label{thm: length}
    Let $(\alpha, \beta)$ be a filling pair of type $\{8,8\}$ in a hyperbolic surface $X\in \mathcal{T}_2$. Then
    $$\mathrm{length}_X(\alpha,\beta) \geq 8\cdot\cosh^{-1}\left(\sqrt{2}+1\right).$$
\end{theorem}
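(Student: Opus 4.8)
The plan is to realize the two complementary disks as hyperbolic octagons, bound $\mathrm{length}_X(\alpha,\beta)$ below by half the sum of their perimeters, invoke the isoperimetric inequality for hyperbolic polygons to replace each octagon by the regular octagon with the same angle sum, and then solve the resulting one-variable optimization, whose minimizer is the regular right-angled octagon. First I would pass to the geodesic representatives of $\alpha$ and $\beta$: this changes neither the fact that they are in minimal position with $i(\alpha,\beta)=4$, nor the combinatorial type of $\alpha\cup\beta$ as a graph on the surface, so $X\setminus(\alpha\cup\beta)=D_1\sqcup D_2$ with each $D_i$ a hyperbolic octagon bounded by geodesic sub-arcs of $\alpha\cup\beta$. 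Each sub-arc borders the complement on both of its sides, hence occurs exactly twice among $\partial D_1$ and $\partial D_2$, so
\[
  \mathrm{length}_X(\alpha,\beta)=l_X(\alpha)+l_X(\beta)=\tfrac12\bigl(\mathrm{perim}(D_1)+\mathrm{perim}(D_2)\bigr).
\]
Moreover the sixteen interior angles of $D_1\sqcup D_2$ are exactly the sixteen angular sectors at the four intersection points, and the four sectors at each point sum to $2\pi$; hence the angle sums $\Theta_i$ of $D_i$ satisfy $\Theta_1+\Theta_2=8\pi$ (equivalently, by Gauss--Bonnet, $\mathrm{Area}(D_1)+\mathrm{Area}(D_2)=12\pi-(\Theta_1+\Theta_2)=4\pi=-2\pi\chi(S_2)$), and since $0<\mathrm{Area}(D_i)<4\pi$ we get $\Theta_i\in(2\pi,6\pi)$.

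Next I would apply the isoperimetric inequality for hyperbolic polygons~\cite{Bezdek}: among hyperbolic octagons with a prescribed angle sum the regular octagon has the least perimeter, so $\mathrm{perim}(D_i)\ge 8\,\ell(\phi_i)$, where $\phi_i:=\Theta_i/8\in(\pi/4,3\pi/4)$ and $\ell(\phi)$ is the side length of the regular hyperbolic octagon all of whose angles equal $\phi$. Cutting that octagon into $16$ congruent right triangles with angles $\pi/8$, $\phi/2$, $\pi/2$ and using the hyperbolic right-triangle identity $\cos(\pi/8)=\cosh(\ell(\phi)/2)\sin(\phi/2)$ gives $\cosh(\ell(\phi)/2)=\cos(\pi/8)/\sin(\phi/2)$. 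Since $\phi_1+\phi_2=\pi$, the theorem reduces to the inequality $\ell(\phi_1)+\ell(\phi_2)\ge 2\ell(\pi/2)$: granting it, $\mathrm{length}_X(\alpha,\beta)\ge\tfrac12\cdot 8(\ell(\phi_1)+\ell(\phi_2))\ge 8\ell(\pi/2)$, and a direct computation gives $\cosh\ell(\pi/2)=2\cosh^2(\ell(\pi/2)/2)-1=4\cos^2(\pi/8)-1=1+\sqrt2$, i.e. $\ell(\pi/2)=\cosh^{-1}(1+\sqrt2)$.

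For the inequality itself — the crux — put $x_i=\ell(\phi_i)/2>0$, so $\cosh x_i\,\sin(\phi_i/2)=\cos(\pi/8)$; since $\phi_2/2=\pi/2-\phi_1/2$, squaring both relations and adding $\sin^2(\phi_1/2)+\cos^2(\phi_1/2)=1$ yields the single constraint $1/\cosh^2 x_1+1/\cosh^2 x_2=1/\cos^2(\pi/8)=4-2\sqrt2$. Writing $p_i=\tanh x_i$ this reads $p_1^2+p_2^2=r^2$ with $r^2:=2(\sqrt2-1)$, and one must minimize $x_1+x_2=\tanh^{-1}p_1+\tanh^{-1}p_2$ over $(p_1,p_2)=(r\cos\psi,r\sin\psi)$, $\psi\in(0,\pi/2)$. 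I would show $f(\psi):=\tanh^{-1}(r\cos\psi)+\tanh^{-1}(r\sin\psi)$ attains its minimum at $\psi=\pi/4$: a computation shows $f'(\psi)$ has the sign of $(\cos\psi-\sin\psi)(1-r^2-r^2\cos\psi\sin\psi)$, and on $(0,\pi/4)$ the first factor is positive while the second decreases strictly from $1-r^2=(\sqrt2-1)^2>0$ at $\psi=0$ to $4-3\sqrt2<0$ at $\psi=\pi/4$, so it vanishes exactly once there; hence $f$ increases then decreases on $[0,\pi/4]$. Since $f(\pi/4)=2\tanh^{-1}(r/\sqrt2)=\tanh^{-1}(r)=\lim_{\psi\to0^+}f(\psi)$ — the middle equality coming from $\tanh 2\theta=2\tanh\theta/(1+\tanh^2\theta)$ together with the identity $2+r^2=2\sqrt2$ — it follows that $f\ge f(\pi/4)$ on $[0,\pi/4]$, and hence on all of $(0,\pi/2)$ by the symmetry $\psi\mapsto\pi/2-\psi$. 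Finally $f(\pi/4)=\tanh^{-1}(r)=\cosh^{-1}\!\bigl((1-r^2)^{-1/2}\bigr)=\cosh^{-1}(\sqrt2+1)=\ell(\pi/2)$, and since $x_1+x_2=\tfrac12(\ell(\phi_1)+\ell(\phi_2))$ this is exactly $\ell(\phi_1)+\ell(\phi_2)\ge 2\ell(\pi/2)$.

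The \textbf{main obstacle} is this last step. The function $\phi\mapsto\ell(\phi)$ is not convex on $(\pi/4,3\pi/4)$, so one cannot simply invoke Jensen; worse, the value $2\ell(\pi/2)$ is attained not only at $\phi_1=\phi_2=\pi/2$ but also in the degenerate limits $\phi_1\to\pi/4$ and $\phi_1\to3\pi/4$, so one must genuinely exclude the possibility that $f$ dips below $2\tanh^{-1}(r/\sqrt2)$ somewhere in between. What makes this work — and, I expect, makes numerical input unnecessary here, in contrast with the $\{4,12\}$ case — is the arithmetic coincidence $2+r^2=2\sqrt2$, which forces the boundary values of $f$ to coincide with its interior critical value and reduces the whole matter to the elementary sign analysis above; equality in the theorem then occurs exactly for the ``regular'' configuration in which each complementary region is a regular right-angled octagon.
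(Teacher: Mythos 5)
Your proof is correct, and it follows the same overall strategy as the paper: pass to geodesic representatives, write $\mathrm{length}_X(\alpha,\beta)$ as half the total perimeter of the two complementary octagons, apply the isoperimetric inequality of \cite{Bezdek} to replace each octagon by the regular one of the same area, and minimize over the resulting one-parameter family of pairs of regular octagons. The essential difference is in the final one-variable minimization, and there your treatment is not merely an alternative but a repair of the paper's argument. Lemma~\ref{lem 1.2} asserts that the perimeter sum $f(\theta)$ satisfies $f'(\theta)<0$ on $(-2\pi,0)$ and $f'(\theta)>0$ on $(0,2\pi)$, so that $\theta=0$ would be the global minimum by plain monotonicity; but this is not the actual shape of the function. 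In your normalization ($\phi_1=\pi/2+\theta/8$), the quantity $\ell(\phi_1)+\ell(\pi-\phi_1)$ \emph{increases} as one moves away from the symmetric point, attains an interior maximum, and then decreases to a boundary limit that coincides with the central value (numerically: $\approx 3.0571$ at $\phi_1=\pi/2$, rising to $\approx 3.23$ near $\phi_1\approx 0.9$, and returning to $\approx 3.0571$ as $\phi_1\to\pi/4$); moreover the bracket in the paper's displayed formula for $f'(\theta)$ is negative for $\theta\in(0,2\pi)$, contradicting the sign pattern claimed immediately after it. Your substitution $p_i=\tanh(\ell(\phi_i)/2)$, the reduction to the circle $p_1^2+p_2^2=2(\sqrt2-1)$, the factorization of $f'(\psi)$, and especially the identity $2+r^2=2\sqrt2$ forcing the degenerate boundary values to equal the value at $\psi=\pi/4$, together give a complete proof of $\ell(\phi_1)+\ell(\phi_2)\ge 2\ell(\pi/2)$ at exactly the point where the published proof is defective. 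A further small gain: you use only that the sixteen sector angles sum to $8\pi$ and that each octagon has positive area, so your argument is independent of the particular fat-graph combinatorics of the $\{8,8\}$ pair.
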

\begin{proof}
    We assume that both the curves $\alpha$ and $\beta$ are simple closed geodesics. Let the angles at the consecutive intersection points along $\alpha$ be denoted by $\theta_1, \theta_2, \theta_3,$ and $\theta_4$ (see Figure~\ref{fig: 3.1}). By assumption, the filling pair $(\alpha,\beta)$ decompose the surface into two octagons, denoted by, $P_1$ and $P_2$. Then the interior angles of the octagons are as in Figure \ref{fig: 3.1}. Using the Gauss-Bonnet theorem, areas of the polygons are $(2\pi-\theta)$ and $(2\pi+\theta)$, where $\theta=\theta_1+\theta_2-\theta_3-\theta_4$. The value of $\theta$ ranges in $(-2\pi,2\pi)$. Now suppose $\tilde{P_1}$ and $\tilde{P_2}$ are regular octagons with $\mathrm{Area}(\tilde{P_i})=\mathrm{Area}(P_i), i=1,2$. It is well known that among all hyperbolic $n$-gons with fixed area, the regular one gives the least perimeter (see \cite{Bezdek}). Therefore, it follows that
    $$\mathrm{Perim}(\tilde{P_1})+\mathrm{Perim}(\tilde{P_2})\leq \mathrm{Perim}(P_1)+\mathrm{Perim}(P_2).$$

    Using basic hyperbolic trigonometry, we have
    \begin{align*}
        \mathrm{Perim}(\tilde{P_1})&=8 \cdot \cosh^{-1}\left(\frac{\cos\left(\frac{\pi}{4}\right)+\cos^2\left(\frac{\pi}{4}+\frac{\theta}{16}\right)}{\sin^2\left(\frac{\pi}{4}+\frac{\theta}{16}\right)}\right),\\
        \mathrm{Perim}(\tilde{P_2})&=8\cdot \cosh^{-1}\left(\frac{\cos\left(\frac{\pi}{4}\right)+\cos^2\left(\frac{\pi}{4}-\frac{\theta}{16}\right)}{\sin^2\left(\frac{\pi}{4}-\frac{\theta}{16}\right)}\right).
    \end{align*}

    From Lemma \ref{lem 1.2} below, it follows that $2\cdot (8 \cdot \cosh^{-1}(\sqrt{2}+1))\leq \mathrm{Perim}(\tilde{P_1})+\mathrm{Perim}(\tilde{P_2})$, where $8 \cdot \cosh^{-1}(\sqrt{2}+1)$ is the perimeter of a regular right-angled octagon with area $2\pi$. By the Gauss–Bonnet theorem, the interior angle of a regular octagon with area $2\pi$ is $\frac{\pi}{2}$. This shows that the inequality in the thoerem is sharp, providing a tight lower bound for filling pairs of type $\{8,8\}$.
    \begin{figure}[htbp]
        \centering
        \begin{tikzpicture}[xscale=.8,yscale=.8]
            \draw [-<-=.98, -<-=.93, -<-=.865, -<-=.785, -<-=.74] (0,1)--(6,1)--(6,10)--(0,10)--cycle;
            
            \draw (.32,9.1) node {\tiny$\alpha_1$}  (.32,3.5) node {\tiny$\alpha_2$} (.32,5.4) node {\tiny$\alpha_3$} (.33,7.5) node {\tiny$\alpha_4$};
            
            \draw (-.3,8.2) node {\tiny $\theta_1$} (-.3,6.2) node {\tiny $\theta_2$} (-.3,4.2) node {\tiny $\theta_3$} (-.3,2.2) node {\tiny $\theta_4$};
    
            \draw [->-=.2, ->-=.75] (-1.5,8)--(1.5,8);
            \draw (1,8.3) node {\tiny $\beta_1$};

            \draw [-<-=.2, -<-=.75] (-1.5,2)--(1.5,2);
            \draw [-<-=.2, -<-=.75] (-1.5,4)--(1.5,4);
            \draw [->-=.2, ->-=.75] (-1.5,6)--(1.5,6);

            \draw (-1,2.3) node {\tiny $\beta_3$} (-1,4.3) node {\tiny $\beta_2$} (1,6.3) node {\tiny $\beta_4$};
    
            \draw (-1.5,8)--(-1.5,7)--(-.2,7);
            \draw (.2,7)--(1.5,7)--(1.5,6);
            \draw (-1.5,6)--(-3,6)--(-3,2)--(-1.5,2);
            \draw (1.5,2)--(1.5,3)--(.2,3);
            \draw (-.2,3)--(-1.5,3)--(-1.5,4);
            \draw (1.5,4)--(3,4)--(3,8)--(1.5,8);

              \draw [bend left, -<-=.5] ({9.5+2*cos(22.7)},{7.5+2*sin(22.7)}) to ({9.5+2*cos(67.5)},{7.5+2*sin(67.5)});
              \draw [bend left, -<-=.5] ({9.5+2*cos(67.5)},{7.5+2*sin(67.5)}) to ({9.5+2*cos(112.5)},{7.5+2*sin(112.5)});
              \draw [bend left, -<-=.5] ({9.5+2*cos(112.5)},{7.5+2*sin(112.5)}) to ({9.5+2*cos(157.5)},{7.5+2*sin(157.5)});
              \draw [bend left, -<-=.5] ({9.5+2*cos(157.5)},{7.5+2*sin(157.5)}) to ({9.5+2*cos(202.5)},{7.5+2*sin(202.5)});
              \draw [bend left, -<-=.5] ({9.5+2*cos(202.5)},{7.5+2*sin(202.5)}) to ({9.5+2*cos(247.5)}, {7.5+2*sin(247.5)});
              \draw [bend left, -<-=.5] ({9.5+2*cos(247.5)}, {7.5+2*sin(247.5)}) to ({9.5+2*cos(292.5)},{7.5+2*sin(292.5)});
              \draw [bend left, -<-=.5] ({9.5+2*cos(292.5)},{7.5+2*sin(292.5)}) to ({9.5+2*cos(337.5)},{7.5+2*sin(337.5)});
              \draw [bend left, -<-=.5] ({9.5+2*cos(337.5)},{7.5+2*sin(337.5)}) to ({9.5+2*cos(22.7)},{7.5+2*sin(22.7)});
    
              \draw  ({9.5+1.9*cos(90)},{7.5+1.9*sin(90)}) node {\tiny$\alpha_1$} ({9.5+2*cos(135)},{7.5+2*sin(135)}) node {\tiny$\beta_1^{-1}$} ({9.5+2*cos(180)},{7.5+2*sin(180)}) node {\tiny$\alpha_2$} ({9.5+2*cos(225)},{7.5+2*sin(225)}) node {\tiny$\beta_2$} ({9.5+2*cos(270)},{7.5+2*sin(270)}) node {\tiny$\alpha_3^{-1}$} ({9.5+2*cos(315)},{7.5+2*sin(315)}) node {\tiny$\beta_3$} ({9.5+2*cos(0)},{7.5+2*sin(0)}) node {\tiny$\alpha_2^{-1}$} ({9.5+2*cos(45)},{7.5+2*sin(45)}) node {\tiny$\beta_2^{-1}$};

              \draw [bend left, -<-=.5] ({9.5+2*cos(22.7)},{3+2*sin(22.7)}) to ({9.5+2*cos(67.5)},{3+2*sin(67.5)});
              \draw [bend left, -<-=.5] ({9.5+2*cos(67.5)},{3+2*sin(67.5)}) to ({9.5+2*cos(112.5)},{3+2*sin(112.5)});
              \draw [bend left, -<-=.5] ({9.5+2*cos(112.5)},{3+2*sin(112.5)}) to ({9.5+2*cos(157.5)},{3+2*sin(157.5)});
              \draw [bend left, -<-=.5] ({9.5+2*cos(157.5)},{3+2*sin(157.5)}) to ({9.5+2*cos(202.5)},{3+2*sin(202.5)});
              \draw [bend left, -<-=.5] ({9.5+2*cos(202.5)},{3+2*sin(202.5)}) to ({9.5+2*cos(247.5)}, {3+2*sin(247.5)});
              \draw [bend left, -<-=.5] ({9.5+2*cos(247.5)}, {3+2*sin(247.5)}) to ({9.5+2*cos(292.5)},{3+2*sin(292.5)});
              \draw [bend left, -<-=.5] ({9.5+2*cos(292.5)},{3+2*sin(292.5)}) to ({9.5+2*cos(337.5)},{3+2*sin(337.5)});
              \draw [bend left, -<-=.5] ({9.5+2*cos(337.5)},{3+2*sin(337.5)}) to ({9.5+2*cos(22.7)},{3+2*sin(22.7)});
    
              \draw  ({9.5+1.9*cos(90)},{3+1.9*sin(90)}) node {\tiny$\alpha_1^{-1}$} ({9.5+2*cos(135)},{3+2*sin(135)}) node {\tiny$\beta_3^{-1}$} ({9.5+2*cos(180)},{3+2*sin(180)}) node {\tiny$\alpha_4^{-1}$} ({9.5+2*cos(225)},{3+2*sin(225)}) node {\tiny$\beta_4$} ({9.5+2*cos(270)},{3+2*sin(270)}) node {\tiny$\alpha_3$} ({9.5+2*cos(315)},{3+2*sin(315)}) node {\tiny$\beta_1$} ({9.5+2*cos(0)},{3+2*sin(0)}) node {\tiny$\alpha_4$} ({9.5+2*cos(45)},{3+2*sin(45)}) node {\tiny$\beta_4^{-1}$};

               \draw ({9.5+1.6*cos(67.5)},{7.5+1.6*sin(67.5)}) node {\tiny $\theta_4$} ({9.5+1.6*cos(112.5)},{7.5+1.6*sin(112.5)}) node {\tiny $\overline{\theta}_1$} ({9.5+1.6*cos(157.5)},{7.5+1.6*sin(157.5)}) node {\tiny $\theta_3$} ({9.5+1.6*cos(202.5)},{7.5+1.6*sin(202.5)}) node {\tiny $\overline{\theta}_4$} ({9.5+1.6*cos(247.5)},{7.5+1.6*sin(247.5)}) node {\tiny $\theta_3$} ({9.5+1.6*cos(292.5)},{7.5+1.6*sin(292.5)}) node {\tiny $\overline{\theta}_2$} ({9.5+1.6*cos(337.5)},{7.5+1.6*sin(337.5)}) node {\tiny $\theta_4$} ({9.5+1.6*cos(22.5)},{7.5+1.6*sin(22.5)}) node {\tiny $\overline{\theta}_3$};

               \draw ({9.5+1.6*cos(67.5)},{3+1.6*sin(67.5)}) node {\tiny $\theta_1$} ({9.5+1.6*cos(112.5)},{3+1.6*sin(112.5)}) node {\tiny $\overline{\theta}_4$} ({9.5+1.6*cos(157.5)},{3+1.6*sin(157.5)}) node {\tiny $\theta_2$} ({9.5+1.6*cos(202.5)},{3+1.6*sin(202.5)}) node {\tiny $\overline{\theta}_1$} ({9.5+1.6*cos(247.5)},{3+1.6*sin(247.5)}) node {\tiny $\theta_2$} ({9.5+1.6*cos(292.5)},{3+1.6*sin(292.5)}) node {\tiny $\overline{\theta}_3$} ({9.5+1.6*cos(337.5)},{3+1.6*sin(337.5)}) node {\tiny $\theta_1$} ({9.5+1.6*cos(22.5)},{3+1.6*sin(22.5)}) node {\tiny $\overline{\theta}_2$};

               \draw (9.5,3) node{$P_1$} (9.5,7.5) node{$P_2$};
        \end{tikzpicture}
        \caption{}
        \label{fig: 3.1}
    \end{figure}
\end{proof}

\begin{lemma}\label{lem 1.2}
    Suppose $\tilde{P_1}$ and $\tilde{P_2}$ are as defined in the proof of Theorem \ref{thm: length}. Then 
    $$\mathrm{Perim}(\tilde{P_1})+\mathrm{Perim}(\tilde{P_2}) \geq 2\cdot (8\cosh^{-1}(\sqrt{2}+1)).$$
\end{lemma}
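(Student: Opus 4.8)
The plan is to collapse the apparently two–parameter inequality into a single elementary estimate. Write $t=\theta/16\in(-\tfrac\pi8,\tfrac\pi8)$ and $c=\cos\tfrac\pi4=\tfrac1{\sqrt2}$. By the formulas for $\mathrm{Perim}(\tilde P_1),\mathrm{Perim}(\tilde P_2)$ recorded in the proof of Theorem~\ref{thm: length}, each side of $\tilde P_1$, resp.\ $\tilde P_2$, equals $\cosh^{-1}a$, resp.\ $\cosh^{-1}b$, where
\[
a=\frac{c+\cos^2(\tfrac\pi4+t)}{\sin^2(\tfrac\pi4+t)},\qquad
b=\frac{c+\cos^2(\tfrac\pi4-t)}{\sin^2(\tfrac\pi4-t)} .
\]
Both are $\ge 1$, so $\tfrac18\big(\mathrm{Perim}(\tilde P_1)+\mathrm{Perim}(\tilde P_2)\big)=\cosh^{-1}a+\cosh^{-1}b\ge0$, and since $\cosh$ is strictly increasing on $[0,\infty)$ and $2\cosh^{-1}(\sqrt2+1)\ge0$, the lemma is equivalent to $\cosh(\cosh^{-1}a+\cosh^{-1}b)\ge\cosh\big(2\cosh^{-1}(\sqrt2+1)\big)$. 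Using the addition formula $\cosh(\cosh^{-1}a+\cosh^{-1}b)=ab+\sqrt{(a^2-1)(b^2-1)}$ and $\cosh\big(2\cosh^{-1}(\sqrt2+1)\big)=2(\sqrt2+1)^2-1=5+4\sqrt2$, the target becomes
\[
ab+\sqrt{(a^2-1)(b^2-1)}\ \ge\ 5+4\sqrt2 .
\]

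The crucial observation is the identity $\cos^2(\tfrac\pi4+t)+\cos^2(\tfrac\pi4-t)=1$. Setting $p=\cos^2(\tfrac\pi4+t)$, $q=\cos^2(\tfrac\pi4-t)$ we then have $q=1-p$, and with $u:=p-q=-\sin 2t\in(-\tfrac1{\sqrt2},\tfrac1{\sqrt2})$ one gets $p=\tfrac{1+u}{2}$, $q=\tfrac{1-u}{2}$. I would then use the factorization $a^2-1=\dfrac{(c+1)(c+2p-1)}{(1-p)^2}$ and its symmetric counterpart for $b$, and substitute; using $1-p=q$, $1-q=p$ everything collapses to
\[
(1-p)(1-q)=pq=\frac{1-u^2}{4},\qquad (c+p)(c+q)=\frac{(2c+1)^2-u^2}{4},\qquad (c+2p-1)(c+2q-1)=c^2-u^2,
\]
whence
\[
ab+\sqrt{(a^2-1)(b^2-1)}=\frac{(2c+1)^2-u^2+4(c+1)\sqrt{c^2-u^2}}{1-u^2}.
\]
Along the way one checks that $|u|<\tfrac1{\sqrt2}=c$ on the open interval, so $c^2-u^2>0$ and all $\cosh^{-1}$–arguments stay $\ge1$; the endpoints $|u|=\tfrac1{\sqrt2}$ correspond to a degenerate octagon. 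I would also note here that the naive shortcut — that the perimeter of a regular hyperbolic octagon is a convex function of its area, which together with $\mathrm{Area}(\tilde P_1)+\mathrm{Area}(\tilde P_2)=4\pi$ would prove the claim immediately — fails, since that function is not convex; it is exactly the algebraic collapse forced by $p+q=1$ that makes the argument go through.

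Finally, set $v=u^2\in[0,\tfrac12)$ and use $c^2=\tfrac12$. Since $5+4\sqrt2=(2c+1)^2+4c(c+1)$, the target inequality, after clearing the positive denominator $1-v$ and cancelling the common terms, reduces to
\[
\sqrt{c^2-v}\ \ge\ c\,(1-2v).
\]
On $[0,\tfrac12)$ the right side is nonnegative, so squaring is reversible and yields $c^2-v\ge c^2(1-2v)^2$, i.e.\ $v\big(4c^2-1-4c^2v\big)\ge0$; with $4c^2=2$ this is $v(1-2v)\ge0$, which holds for all $v\in[0,\tfrac12]$. This proves the lemma, with equality precisely at $v=0$, that is $\theta=0$, the regular configuration. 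The only real difficulty in this plan is spotting the identity $p+q=1$ — without it one is faced with a genuinely non-convex extremal problem — and the step most prone to a sign slip is the factorization of $a^2-1$ and the ensuing simplification, so that is where I would be most careful.
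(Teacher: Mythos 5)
Your argument is correct, and I verified the key algebraic steps: with $c=\tfrac1{\sqrt2}$, $p+q=1$, $u=p-q=-\sin(\theta/8)$, one indeed gets $a=\frac{c+p}{q}$, $b=\frac{c+q}{p}$, $ab+\sqrt{(a^2-1)(b^2-1)}=\frac{(2c+1)^2-u^2+4(c+1)\sqrt{c^2-u^2}}{1-u^2}$, and after clearing the denominator the inequality collapses to $4(c+1)\bigl(2cv+\sqrt{c^2-v}-c\bigr)\ge0$, i.e.\ $\sqrt{c^2-v}\ge c(1-2v)$, which squares to $v(1-2v)\ge0$. The route is genuinely different from the paper's: the paper treats $f(\theta)=\mathrm{Perim}(\tilde P_1)+\mathrm{Perim}(\tilde P_2)$ as a function of one variable, computes $f'(\theta)$ explicitly, and applies a first-derivative sign test to locate the minimum at $\theta=0$; you instead apply $\cosh$ to both sides of the inequality, use the addition formula $\cosh(\cosh^{-1}a+\cosh^{-1}b)=ab+\sqrt{(a^2-1)(b^2-1)}$, and exploit the identity $\cos^2(\tfrac\pi4+t)+\cos^2(\tfrac\pi4-t)=1$ to reduce everything to a one-line polynomial inequality in $v=u^2$. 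Your approach buys an entirely calculus-free, exactly verifiable argument (with the equality case $\theta=0$ falling out for free), at the cost of some algebraic bookkeeping; the paper's derivative computation is shorter to state but harder to check by hand. Your parenthetical warning that the regular-octagon perimeter is not a convex function of area (so one cannot shortcut via Jensen) is a fair caveat, though strictly speaking neither your argument nor the paper's needs it.
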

\begin{proof}
    Let
    \begin{align*}
        f(\theta)& \coloneqq \mathrm{Perim}(\tilde{P_1})+\mathrm{Perim}(\tilde{P_2})\\
    &=8 \cdot \cosh^{-1}\left(\frac{\cos\left(\frac{\pi}{4}\right)+\cos^2\left(\frac{\pi}{4}+\frac{\theta}{16}\right)}{\sin^2\left(\frac{\pi}{4}+\frac{\theta}{16}\right)}\right) + 8\cdot \cosh^{-1}\left(\frac{\cos\left(\frac{\pi}{4}\right)+\cos^2\left(\frac{\pi}{4}-\frac{\theta}{16}\right)}{\sin^2\left(\frac{\pi}{4}-\frac{\theta}{16}\right)}\right)\\
    &=8 \cdot \cosh^{-1} \left ({\frac{\sqrt{2}+1-\sin{\left (\frac{\theta}{2}\right)}}{1+\sin{\left (\frac{\theta}{2}\right)}}} \right ) + 8 \cdot \cosh^{-1}\left ({\frac{\sqrt{2}+1 + \sin{\left (\frac{\theta}{2}\right)}}{1 - \sin{\left (\frac{\theta}{2}\right)}}}\right).
    \end{align*}
    Then 
    $$f'(\theta) = \frac{(2 + \sqrt{2}) \cos{\left (\frac{\theta}{8}\right )}}{ \sqrt{2+2 \sqrt{2}
    }} \cdot \left[ \frac{1}{\sqrt{1+\sqrt{2} \sin {\left (\frac{\theta}{8}\right )}}} - \frac{1}{\sqrt{1 - \sqrt{2} \sin {\left (\frac{\theta}{8}\right )}}} \right].$$
    It is straightforward to see that
    \begin{align*}
        f'(\theta)&<0, \text{ for } -2\pi<\theta<0,\\
        &=0 \text{ for } \theta =0,\\
        &>0 \text{ for } 0<\theta <2 \pi.
    \end{align*}
    This shows that the function $f(\theta)$ has a minimum at $\theta =0$. This completes the proof of the lemma.
\end{proof}

\subsection{The length of \texorpdfstring{$\{4,12\}$}{} filling pairs}\label{sub 4.1}
The aim of this subsection is to prove the following theorem.

\begin{theorem}\label{thm 4.3}
    Let $(\alpha, \beta)$ be a $\{4,12\}$ filling pair in a hyperbolic surface $X\in \mathcal{T}_2$. Then
    $$\mathrm{length}_X(\alpha,\beta) \geq L_0,$$
    where $L_0$ is $6 \cdot \cosh^{-1}(7/2)\approx 11.5490838.$
\end{theorem}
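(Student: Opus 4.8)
The plan is to mimic the strategy used for the $\{8,8\}$ case in Theorem~\ref{thm: length}, but now adapted to the asymmetric configuration. By Theorem~\ref{thm1.2} there is (up to the mapping class group action) a unique $\{4,12\}$ filling pair, so we may fix the combinatorial model: a square $D_1$ with boundary word on four arcs and a $12$-gon $D_2$, with the identification pattern dictated by the boundary words computed in Case~\ref{case1} (equivalently Case~\ref{case7}) of the proof of Theorem~\ref{prop1.1}. First I would assume $\alpha$ and $\beta$ are realized as geodesics and label the four interior angles $\theta_1,\dots,\theta_4$ at the intersection points along $\alpha$ (as in Figure~\ref{fig: 3.1}). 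Tracking how these angles distribute around the two complementary polygons via the cyclic-order data of $\sigma_0\sigma_1$, one reads off the angle sums of $D_1$ and $D_2$; the Gauss--Bonnet theorem then gives $\mathrm{Area}(D_1) = (\#\text{sides}-2)\pi - (\text{angle sum})$ and similarly for $D_2$, so the two areas are expressed as $A_1 = a_1\pi - \psi$ and $A_2 = a_2\pi + \psi$ for an appropriate angle parameter $\psi$ ranging over an interval, with $A_1 + A_2 = 4\pi$ fixed (the total area of $S_2$).

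Next I would invoke the isoperimetric inequality for hyperbolic polygons (\cite{Bezdek}) separately on each region: for a fixed area, the regular $4$-gon and the regular $12$-gon respectively minimize perimeter. Letting $\tilde D_1$, $\tilde D_2$ be the regular square and regular $12$-gon with the same areas, we get $\mathrm{Perim}(\tilde D_1) + \mathrm{Perim}(\tilde D_2) \le \mathrm{length}_X(\alpha,\beta)$, since every $\alpha$- and $\beta$-arc appears exactly twice among the boundaries of $D_1,D_2$. Using the hyperbolic trigonometry of a regular $n$-gon (side length as a function of area, as already recorded in the proof of Theorem~\ref{thm: length}), the right-hand side becomes an explicit one-variable function $g(\psi) = \mathrm{Perim}(\tilde D_1) + \mathrm{Perim}(\tilde D_2)$, and the theorem reduces to showing $g(\psi) \ge 6\cosh^{-1}(7/2)$ on the admissible range of $\psi$, with equality forced at the value where $\tilde D_1$ is the regular right-angled square (area $\pi$) and $\tilde D_2$ is a regular right-angled $12$-gon (area $3\pi$): indeed $4\cosh^{-1}\!\big(\tfrac{\cos(\pi/4)+\cos^2(\pi/4)}{\sin^2(\pi/4)}\big) + 12\cosh^{-1}\!\big(\tfrac{\cos(\pi/6)+\cos^2(\pi/6)}{\sin^2(\pi/6)}\big)$ should simplify to $6\cosh^{-1}(7/2)$, which I would verify by direct computation.

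The main obstacle — and the reason numerics enter, as the authors warn — is that unlike the $\{8,8\}$ case, the function $g(\psi)$ is not minimized by the symmetric/equal-area split, nor is its critical point analytically transparent: the two polygons have different numbers of sides, so the two summands scale differently and the derivative $g'(\psi)$ does not factor cleanly the way $f'(\theta)$ did in Lemma~\ref{lem 1.2}. The hard part will be to rigorously locate the minimum of $g$: I would compute $g'(\psi)$, show it has a unique zero on the admissible interval (e.g. by proving $g$ is convex, or by bounding $g''$, or by a careful sign analysis of the algebraic expression obtained after clearing the $\cosh^{-1}$ derivatives), and then certify — with interval arithmetic if necessary — that the minimum value equals $L_0 = 6\cosh^{-1}(7/2)$ and is attained at the right-angled configuration. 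A secondary point requiring care is confirming that the angle parameter $\psi$ genuinely ranges over an interval containing this minimizing value, i.e. that no additional constraint coming from the gluing pattern (beyond $A_1+A_2 = 4\pi$ and $A_i > 0$) obstructs realizing the regular right-angled square together with the regular right-angled $12$-gon; since such a surface does exist (it is the one built from Theorem~\ref{thm1.2}'s model with the right-angled metric), this should go through.
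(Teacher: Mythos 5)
Your overall strategy --- replace each complementary polygon by the regular polygon of the same area via the isoperimetric inequality of \cite{Bezdek} and then minimize the resulting one-variable function of the area split --- cannot prove this theorem: the relaxation is too lossy when the two polygons have different numbers of sides. Writing $\phi\in(0,2\pi)$ for the area of the quadrilateral (so the $12$-gon has area $4\pi-\phi$), the correct inequality is $\mathrm{length}_X(\alpha,\beta)=\tfrac12\left(\mathrm{Perim}(Q_1)+\mathrm{Perim}(Q_2)\right)\ge \tfrac12\left(\mathrm{Perim}(\tilde Q_1)+\mathrm{Perim}(\tilde Q_2)\right)$; note the factor $\tfrac12$, which you dropped, since each arc appears twice among the polygon boundaries and the total perimeter is therefore \emph{twice} the length. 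One computes $\tfrac12 g(\phi)=2\cosh^{-1}\bigl(\cot^2(\tfrac{\pi}{4}-\tfrac{\phi}{8})\bigr)+6\cosh^{-1}\bigl(\tfrac{\cos(\pi/6)+\cos^2(\pi/4+\phi/24)}{\sin^2(\pi/4+\phi/24)}\bigr)$, which is increasing on $(0,2\pi)$ with $\tfrac12 g(\phi)\to 6\cosh^{-1}(\sqrt{3}+1)\approx 9.98$ as $\phi\to 0^+$; even at the area split realized by the true minimizer, namely $\phi=2\pi-4\cos^{-1}(1/5)\approx 0.806$, it equals only about $11.32$. Both values are strictly below $L_0\approx 11.55$, so no minimization of $g$ over the admissible range can yield the stated bound. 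The geometric reason is that the extremal $12$-gon is \emph{not} regular: the gluing pattern forces its sides to alternate between two distinct lengths ($x$ and $l/2$ in the paper's notation) and its angles between $\theta$ and $\pi-\theta$, so the isoperimetric inequality is far from equality on the $12$-gon, and the slack exactly accounts for the deficit $11.55-11.32$.

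Your identification of the equality case is also internally inconsistent: a hyperbolic quadrilateral with four right angles has area $0$ (not $\pi$), a regular right-angled $12$-gon has area $4\pi$ (not $3\pi$), and the proposed identity $4\cosh^{-1}(\sqrt{2}+1)+12\cosh^{-1}(2\sqrt{3}+3)=6\cosh^{-1}(7/2)$ fails numerically ($\approx 36.75$ versus $\approx 11.55$). The paper's proof proceeds quite differently: it invokes Schmutz's theorem on the existence and uniqueness of the hyperbolic structure $X_0$ minimizing the length of the (combinatorially unique) $\{4,12\}$ pair, uses the two reflection symmetries of the configuration to force equalities among the arc lengths and angles at $X_0$, decomposes the $12$-gon into eight congruent quadrilaterals with sides $x,y,z,l/2$, and minimizes $x+l$ subject to the hyperbolic trigonometric constraints via Lagrange multipliers, with the critical point located numerically and then certified exactly. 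To salvage an isoperimetric-style argument you would need a sharper inequality adapted to polygons with prescribed side-identifications, not the unconstrained regular-polygon bound.
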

\begin{proof}
    Let the angle between $\alpha$ and $\beta$ at the intersection points be $\phi_1, \phi_2, \phi_3$ and $\phi_4$ (listed along $\alpha$). Let $X\setminus (\alpha \cup \beta)=Q_1 \bigsqcup Q_2$, where $Q_1$ and $Q_2$ are 12-gon and 4-gon, respectively (see Figure \ref{fig: 1.2}). Then the areas of $Q_1$ and $Q_2$ are given by $4\pi -\phi$ and $\phi$, where $\phi = \phi_1 + \phi_2- \phi_3- \phi_4$. The value of $\phi$ ranges in the interval $(0,2\pi)$.

\begin{figure}[htbp]
    \centering
    \begin{tikzpicture}[xscale=.8,yscale=.8]
        \draw [-<-=.68, -<-=.93, -<-=.865, -<-=.785, -<-=.74] (0,1)--(4,1)--(4,10)--(0,10)--cycle;
            
            \draw (.32,9.4) node {\tiny$\alpha_1$}  (.32,3.5) node {\tiny$\alpha_2$} (.32,5.4) node {\tiny$\alpha_3$} (.33,7.5) node {\tiny$\alpha_4$};
    
            \draw [->-=.2, ->-=.75] (-1.5,8)--(1.5,8);
            \draw [->-=.2, ->-=.75] (-1.5,2)--(1.5,2);
            \draw [->-=.2, ->-=.75] (-1.5,4)--(1.5,4);
            \draw [->-=.2, ->-=.75] (-1.5,6)--(1.5,6);
    
            \draw (1,8.3) node { \tiny$\beta_1$} (1,2.3) node { \tiny$\beta_3$} (1,4.3) node { \tiny$\beta_2$} (1,6.3) node {\tiny$\beta_4$};
             \draw (-1.5,8)--(-1.5,7)--(-.2,7);
             \draw (.2,7)--(1.5,7)--(1.5,6)--(-4.5,6)--(-4.5,3)--(-.2,3);
             \draw (.2,3)--(1.5,3)--(1.5,2)--(-3,2)--(-3,2.8);
             \draw (-3,3.2)--(-3,5)--(-.2,5);
             \draw (.2,5)--(1.5,5)--(1.5,4)--(-2,4)--(-2,4.8);
             \draw (-2,5.2)--(-2,5.8);
             \draw (-2,6.2)--(-2,9);
             \draw (-2,9)--(-.2,9);
             \draw (.2,9)--(1.5,9)--(1.5,8);

             \draw (-.3,8.2) node {\tiny $\phi_1$} (-.3,6.2) node {\tiny $\phi_2$} (-.3,4.2) node {\tiny $\phi_3$} (-.3,2.2) node {\tiny $\phi_4$};
    
             \foreach \x in {1,2,...,12}
             {
             \draw [bend left, -<-=.5] ({7.5+2.5*cos(15+\x*30)}, {7+2.5*sin(15+\x*30)}) to ({7.5+2.5*cos(15+(\x+1)*30)}, {7+2.5*sin(15+(\x+1)*30)});
             }
    
             \draw ({7.5+2.6*cos(0)}, {7+2.6*sin(0)}) node {\tiny$\alpha_3$} ({7.5+2.7*cos(30)}, {7+2.7*sin(30)}) node {\tiny$\beta_2^{-1}$} ({7.5+2.6*cos(60)}, {7+2.6*sin(60)}) node {\tiny$\alpha_2^{-1}$} ({7.5+2.6*cos(90)}, {7+2.6*sin(90)}) node {\tiny$\beta_1$} ({7.5+2.6*cos(120)}, {7+2.6*sin(120)}) node {\tiny$\alpha_4$} ({7.5+2.7*cos(150)}, {7+2.7*sin(150)}) node {\tiny$\beta_4^{-1}$} ({7.5+2.6*cos(180)}, {7+2.6*sin(180)}) node {\tiny$\alpha_1^{-1}$} ({7.5+2.6*cos(210)}, {7+2.6*sin(210)}) node {\tiny$\beta_2$} ({7.5+2.6*cos(240)}, {7+2.6*sin(240)}) node {\tiny$\alpha_2$} ({7.5+2.7*cos(270)}, {7+2.7*sin(270)}) node {\tiny$\beta_3^{-1}$} ({7.5+2.6*cos(300)}, {7+2.6*sin(300)}) node {\tiny$\alpha_4^{-1}$} ({7.5+2.6*cos(330)}, {7+2.6*sin(330)}) node {\tiny$\beta_4$};
    
             \draw [-<-=.125, -<-=.375, -<-=.625, -<-=.875] (6.5,1.5)--(8.5,1.5)--(8.5,3.5)--(6.5,3.5)--cycle;
             \draw (7.3,3.75) node {\tiny$\alpha_1$} (7.7,1.2) node {\tiny$\alpha_3^{-1}$} (8.9,2.5) node {\tiny$\beta_3$} (6,2.5) node {\tiny$\beta_1^{-1}$};

         \draw ({7.5+2.2*cos(15))}, {7+2.2*sin(15)}) node {\tiny $\overline{\phi}_3$} ({7.5+2.2*cos(45))}, {7+2.2*sin(45)}) node {\tiny $\phi_4$} ({7.5+2.2*cos(75))}, {7+2.2*sin(75)}) node {\tiny $\overline{\phi}_3$} ({7.5+2.2*cos(105))}, {7+2.2*sin(105)}) node {\tiny $\phi_1$} ({7.5+2.2*cos(135))}, {7+2.2*sin(135)}) node {\tiny $\overline{\phi}_2$} ({7.5+2.2*cos(165))}, {7+2.2*sin(165)}) node {\tiny $\phi_1$} ({7.5+2.2*cos(195))}, {7+2.2*sin(195)}) node {\tiny $\overline{\phi}_4$} ({7.5+2.2*cos(225))}, {7+2.2*sin(225)}) node {\tiny $\phi_3$} ({7.5+2.2*cos(255))}, {7+2.2*sin(255)}) node {\tiny $\overline{\phi}_4$} ({7.5+2.2*cos(285))}, {7+2.2*sin(285)}) node {\tiny $\phi_2$} ({7.5+2.2*cos(315))}, {7+2.2*sin(315)}) node {\tiny $\overline{\phi}_1$} ({7.5+2.2*cos(345))}, {7+2.2*sin(345)}) node {\tiny $\phi_2$};

         \draw (8.2,3.2) node {\tiny $\phi_4$} (8.2,1.8) node {\tiny $\overline{\phi}_2$} (6.8,1.8) node {\tiny $\phi_3$} (6.8,3.2) node {\tiny $\overline{\phi}_1$};

         \draw (7.5, 7) node {$Q_1$} (7.5,2.5) node {$Q_2$};
    \end{tikzpicture}
    \caption{}
    \label{fig: 1.2}
\end{figure}

Now we investigate the properties of the surface $X_0 \in \mathcal{T}_2$ where the filling pair $(\alpha, \beta)$ has minimum length. The existence and uniqueness of such a surface $X_0$ is guaranteed by Proposition 2.4 \cite{Schmutz1993}. It is well known that if $\tau: S_2 \to S_2$ is a diffeomorphism, the minimum point for the length of  $(\tau(\alpha), \tau(\beta))$ is $\tau^{-1}\cdot X_0$ (see Chapter 12 \cite{MR2850125}). Let $\tau_1$ and $\tau_2$ be the reflections about the red and green axes, respectively, as shown in Figure \ref{fig: reflections}. It is clear that $\tau_i(\alpha)= \alpha$ and $\tau_i(\beta)= \beta$ and hence $\tau_i\cdot X_0 =X_0, i=1,2$. As $\tau_1(\alpha_4)= \alpha_2$, we get $l_{X_0}(\alpha_4)= l_{\tau_1 \cdot X_0}(\alpha_2)= l_{X_0}(\alpha_2)$. Similarly, we can show that $l_{X_0}(\beta_2)= l_{X_0}(\beta_4), l_{X_0}(\alpha_1)= l_{X_0}(\alpha_3)$. We also have the following conditions on the angles: $\phi_2= \overline{\phi}_4$ and $\phi_3= \overline{\phi}_1$. Now, using the map $\tau_2$, we have $l_{X_0}(\alpha_3)= l_{X_0}(\beta_3), l_{X_0}(\alpha_1)= l_{X_0}(\beta_1)$ and $\phi_3 = \phi_4$. Combining the above relations, we have $\phi_1= \phi_2= \overline{\phi}_3= \overline{\phi}_4= \theta \text{ (say}), l_{X_0}(\alpha_1)= l_{X_0}(\beta_1) = l_{X_0}(\alpha_3)= l_{X_0}(\beta_3)$ and $ l_{X_0}(\alpha_2)= l_{X_0}(\beta_2)=l_{X_0}(\alpha_4)= l_{X_0}(\beta_4)$.

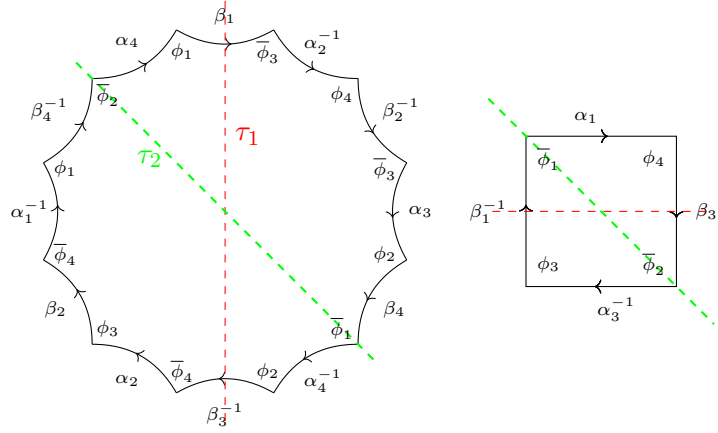
\begin{figure}[htbp]
    \centering
    \begin{tikzpicture}
        \foreach \x in {1,2,...,12}
             {
             \draw [bend left, -<-=.5] ({7.5+2.5*cos(15+\x*30)}, {7+2.5*sin(15+\x*30)}) to ({7.5+2.5*cos(15+(\x+1)*30)}, {7+2.5*sin(15+(\x+1)*30)});
             }
    
             \draw ({7.5+2.6*cos(0)}, {7+2.6*sin(0)}) node {\tiny$\alpha_3$} ({7.5+2.7*cos(30)}, {7+2.7*sin(30)}) node {\tiny$\beta_2^{-1}$} ({7.5+2.6*cos(60)}, {7+2.6*sin(60)}) node {\tiny$\alpha_2^{-1}$} ({7.5+2.6*cos(90)}, {7+2.6*sin(90)}) node {\tiny$\beta_1$} ({7.5+2.6*cos(120)}, {7+2.6*sin(120)}) node {\tiny$\alpha_4$} ({7.5+2.7*cos(150)}, {7+2.7*sin(150)}) node {\tiny$\beta_4^{-1}$} ({7.5+2.6*cos(180)}, {7+2.6*sin(180)}) node {\tiny$\alpha_1^{-1}$} ({7.5+2.6*cos(210)}, {7+2.6*sin(210)}) node {\tiny$\beta_2$} ({7.5+2.6*cos(240)}, {7+2.6*sin(240)}) node {\tiny$\alpha_2$} ({7.5+2.7*cos(270)}, {7+2.7*sin(270)}) node {\tiny$\beta_3^{-1}$} ({7.5+2.6*cos(300)}, {7+2.6*sin(300)}) node {\tiny$\alpha_4^{-1}$} ({7.5+2.6*cos(330)}, {7+2.6*sin(330)}) node {\tiny$\beta_4$};
    
             \draw [-<-=.125, -<-=.375, -<-=.625, -<-=.875] (11.5,6)--(13.5,6)--(13.5,8)--(11.5,8)--cycle;
             \draw (5+7.3,3.75+4.5) node {\tiny$\alpha_1$} (5+7.7,1.2+4.5) node {\tiny$\alpha_3^{-1}$} (5+8.9,2.5+4.5) node {\tiny$\beta_3$} (5+6,2.5+4.5) node {\tiny$\beta_1^{-1}$};

             \draw [dashed, red] ({7.5+2.8*cos(90)}, {7+2.8*sin(90)}) to ({7.5+2.8*cos(270)}, {7+2.8*sin(270)});

             \draw [dashed, red] (13.9,7) to (11,7);

             \draw[dashed, thick, green] ({7.5+2.8*cos(135)}, {7+2.8*sin(135)}) to ({7.5+2.8*cos(315)}, {7+2.8*sin(315)});
             \draw[dashed, thick, green] (11,8.5) to (14,5.5);

             \draw [red] (7.8,8) node{$\tau_1$};
             \draw [green] (6.5,7.7) node{$\tau_2$};

             \draw ({7.5+2.2*cos(15))}, {7+2.2*sin(15)}) node {\tiny $\overline{\phi}_3$} ({7.5+2.2*cos(45))}, {7+2.2*sin(45)}) node {\tiny $\phi_4$} ({7.5+2.2*cos(75))}, {7+2.2*sin(75)}) node {\tiny $\overline{\phi}_3$} ({7.5+2.2*cos(105))}, {7+2.2*sin(105)}) node {\tiny $\phi_1$} ({7.5+2.2*cos(135))}, {7+2.2*sin(135)}) node {\tiny $\overline{\phi}_2$} ({7.5+2.2*cos(165))}, {7+2.2*sin(165)}) node {\tiny $\phi_1$} ({7.5+2.2*cos(195))}, {7+2.2*sin(195)}) node {\tiny $\overline{\phi}_4$} ({7.5+2.2*cos(225))}, {7+2.2*sin(225)}) node {\tiny $\phi_3$} ({7.5+2.2*cos(255))}, {7+2.2*sin(255)}) node {\tiny $\overline{\phi}_4$} ({7.5+2.2*cos(285))}, {7+2.2*sin(285)}) node {\tiny $\phi_2$} ({7.5+2.2*cos(315))}, {7+2.2*sin(315)}) node {\tiny $\overline{\phi}_1$} ({7.5+2.2*cos(345))}, {7+2.2*sin(345)}) node {\tiny $\phi_2$};

            \draw (5+8.2,3.2+4.5) node {\tiny $\phi_4$} (5+8.2,1.8+4.5) node {\tiny $\overline{\phi}_2$} (5+6.8,1.8+4.5) node {\tiny $\phi_3$} (5+6.8,3.2+4.5) node {\tiny $\overline{\phi}_1$};
    \end{tikzpicture}
    \caption{The map $\tau_1$ is reflection about the red arc and $\tau_2$ is reflection about the green arc.}
    \label{fig: reflections}
\end{figure}


It is straightforward to see that this 12-gon can be obtained by taking eight copies of the quadrilateral shown in Figure~\ref{quad} (left) and gluing them along the sides labelled by $y$ and $z$. To determine the shortest length of the $\{4,12\}$ filling pair, it therefore suffices to find the minimum value of $x + l$, where $x$ and $l$ are the side lengths of quadrilateral in Figure \ref{quad} (left). Now, the rest of the proof follows from Theorem \ref{thm 3.3} and from the fact that $\cosh^{-1}\left({\frac{7}{2}}\right) = 2 \cosh^{-1} \left({\frac{3}{2}}\right)$.
\begin{figure}[htbp]
    \centering
    \begin{tikzpicture}
        \foreach \x in {1,2,...,12}
         {
         \draw [bend left, -<-=.5] ({7.5+2.5*cos(15+\x*30)}, {7+2.5*sin(15+\x*30)}) to ({7.5+2.5*cos(15+(\x+1)*30)}, {7+2.5*sin(15+(\x+1)*30)});
         }

         \draw ({7.5+2.6*cos(0)}, {7+2.6*sin(0)}) node {\tiny$\alpha_3$} ({7.5+2.7*cos(30)}, {7+2.7*sin(30)}) node {\tiny$\beta_2^{-1}$} ({7.5+2.6*cos(60)}, {7+2.6*sin(60)}) node {\tiny$\alpha_2^{-1}$} ({7.5+2.6*cos(90)}, {7+2.6*sin(90)}) node {\tiny$\beta_1$} ({7.5+2.6*cos(120)}, {7+2.6*sin(120)}) node {\tiny$\alpha_4$} ({7.5+2.7*cos(150)}, {7+2.7*sin(150)}) node {\tiny$\beta_4^{-1}$} ({7.5+2.6*cos(180)}, {7+2.6*sin(180)}) node {\tiny$\alpha_1^{-1}$} ({7.5+2.6*cos(210)}, {7+2.6*sin(210)}) node {\tiny$\beta_2$} ({7.5+2.6*cos(240)}, {7+2.6*sin(240)}) node {\tiny$\alpha_2$} ({7.5+2.7*cos(270)}, {7+2.7*sin(270)}) node {\tiny$\beta_3^{-1}$} ({7.5+2.6*cos(300)}, {7+2.6*sin(300)}) node {\tiny$\alpha_4^{-1}$} ({7.5+2.6*cos(330)}, {7+2.6*sin(330)}) node {\tiny$\beta_4$};

         \draw ({7.5+2.2*cos(15))}, {7+2.2*sin(15)}) node {\tiny $\overline{\theta}$} ({7.5+2.2*cos(45))}, {7+2.2*sin(45)}) node {\tiny $\theta$} ({7.5+2.2*cos(75))}, {7+2.2*sin(75)}) node {\tiny $\overline{\theta}$} ({7.5+2.2*cos(105))}, {7+2.2*sin(105)}) node {\tiny $\overline{\theta}$} ({7.5+2.2*cos(135))}, {7+2.2*sin(135)}) node {\tiny $\theta$} ({7.5+2.2*cos(165))}, {7+2.2*sin(165)}) node {\tiny $\overline{\theta}$} ({7.5+2.2*cos(195))}, {7+2.2*sin(195)}) node {\tiny $\overline{\theta}$} ({7.5+2.2*cos(225))}, {7+2.2*sin(225)}) node {\tiny $\theta$} ({7.5+2.2*cos(255))}, {7+2.2*sin(255)}) node {\tiny $\overline{\theta}$} ({7.5+2.2*cos(285))}, {7+2.2*sin(285)}) node {\tiny $\overline{\theta}$} ({7.5+2.2*cos(315))}, {7+2.2*sin(315)}) node {\tiny $\theta$} ({7.5+2.2*cos(345))}, {7+2.2*sin(345)}) node {\tiny $\overline{\theta}$};
    \end{tikzpicture}
    \caption{}
    \label{fig:1.3}
\end{figure}
\end{proof}

\subsection*{Minimization of the value $x+l$}

\begin{figure}[htbp]
    \centering
    \begin{tikzpicture}
        \draw (0,-.5)--(0,3);
        \draw (0,3) to [bend left=20] (-1.5,3.5) to [bend left=10] (-3,2.5) to [bend left=10] (0,-.5);

        \draw (-.2,0.2) node {$\frac{\pi}{4}$} (.2,1.4) node {$z$} (-.25,2.65) node {$\frac{\pi}{2}$} (-.75, 3.5) node {$\frac{l}{2}$} (-2.25, 3.1) node{$x$} (-2.25,2.4) node{$\theta/2$} (-1.6,1.1) node{$y$};

        \draw (1.6,1.25) to [bend left=10] (3.4, -.75) to [bend left=10] (5.4, 1.25) to [bend left=10] (3.6, 3.25) to [bend left=10] (1.6,1.25); 

        \draw (2,1.25) node {$\theta$} (4.9,1.25) node {$\theta$} (3.6,2.8) node {$\theta$} (3.4, -.3) node {$\theta$} (2.4,.25) node {$l$} (4.5,.25) node {$l$} (4.6,2.25) node {$l$} (2.5,2.25) node {$l$};
    \end{tikzpicture}
    \caption{}
    \label{quad}
\end{figure}

By assumption, one side of the quadrilateral has length \(\tfrac{l}{2}\), and the interior angles are \(\tfrac{\pi}{4}, \tfrac{\pi}{2}, \pi-\theta\), and \(\tfrac{\theta}{2}\) (see Figure~\ref{quad}). Let the lengths of the other three sides be \(x,y,z\). Our objective is to minimize \(x + l\) while satisfying the geometric constraints of the two quadrilaterals in Figure~\ref{quad}.

For a hyperbolic triangle with angles \(A,B,C\) and and opposite side lengths \(a,b,c\), the following identity holds.
\begin{equation}
\label{hypertriangle}
    \cosh a=\frac{\cos A +\cos B \cos C }{\sin B \sin C }.
\end{equation}

Now using the identity \eqref{hypertriangle} in the quadrilateral (right side) in Figure~\ref{quad}, the side length \(l\) is given by
\begin{equation}
    \label{triangle1}
    \cosh{l} = \frac{\cos\left(\frac{\theta}{2}\right)+\cos\theta \cos\left(\frac{\theta}{2}\right)}{\sin\theta\sin\left(\frac{\theta}{2}\right)}
\end{equation}

\begin{figure}[htbp]
    \centering
    \begin{tikzpicture}
        \draw (0,-1)--(0,3);
        \draw (0,3) to [bend left=20] (-2,3.8) to [bend left=10] (-4,2.5) to [bend left=10] (0,-1);

        \draw (0,3) to [bend left=25] (-4,2.5);

        \draw (-.2,-.3) node {$\frac{\pi}{4}$} (.2,1) node {$z$} (-.6,2.9) node {\tiny$\theta_2$} (-.6,2.3) node {\tiny $\pi/2-\theta_2$}  (-.9, 3.5) node {$\frac{l}{2}$} (-2.95, 3.3) node{$x$} (-3.35,2.55) node {\tiny$\theta_1$} (-2.5,2) node {\tiny$\theta/2-\theta_1$} (-2,.75) node{$y$} (-2,3.3) node {\tiny$\pi-\theta$} (-2,2.45) node {$w$};
    \end{tikzpicture}
    \caption{}
    \label{quad2}
\end{figure}

For the left quadrilateral in Figure \ref{quad}, we introduce an auxiliary geodesic of length $w$, see figure \ref{quad2}. By applying Equation~\eqref{hypertriangle} to the two triangles formed in Figure~\ref{quad2}, it follows that
\begin{equation}
\label{triangle2}
    \cosh w =\frac{-\cos \theta +\cos \theta_1 \cos \theta_2 }{\sin \theta_1 \sin \theta_2 }=\frac{\frac{\sqrt{2}}{2}+\cos(\frac{\theta}{2}-\theta_1)\sin \theta_2 }{\sin(\frac{\theta}{2}-\theta_1)\cos \theta_2 }.
\end{equation}

Similarly, $l$ and $x$ can also be expressed in terms of the three angles of the triangle:
\begin{equation}
\label{triangle3}
\begin{aligned}
    \cosh x &=\frac{\cos \theta_2 -\cos \theta \cos \theta_1 }{\sin \theta \sin \theta_1}\\
    \cosh\left(\frac{l}{2} \right)&=\frac{\cos \theta_1 -\cos \theta \cos \theta_2}{\sin \theta \sin \theta_2 }.
\end{aligned}
\end{equation}

However, theoretically deriving the minimum of $x+l$ remains challenging. The uniqueness of the minimum for $x+l$ follows from the convexity of the length function $\text{length}_X(\alpha,\beta)$, which makes it possible to numerically compute the minimum point. We can formulate the numerical computation method as follows: with five variables $l,\theta,x,y,z$ and four constraints (one constraint in \eqref{triangle1}, one constraint in \eqref{triangle2} and two constraints in \eqref{triangle3}), we can use $\theta$ to solve the remaining variables. This reduces $x+l$ to a $\theta$-dependent function, whose minimum can be numerically found via gradient descent. This numerical method can be referenced in Section 3.2 of \cite{AnIhringerIrmer2025}. We implement this numerical computation using Python \footnote{Code associated with this work is available on the arXiv version of this paper}. Subsequently, we can use mathematical tools such as SageMath to determine the exact values of the minimum point.

With the exact value of the minimum point. To rigorously prove the result, we employ the method of Lagrange multipliers for constrained optimization. For a differentiable objective function f: $\mathbb{R}^n \rightarrow \mathbb{R}$ with equality constraints $g_i(\mathbf{x})=0$, we construct the Lagrangian:

\[
\mathcal{L}(\mathbf{x},\boldsymbol{\lambda})=f(\mathbf{x})-\sum_{i=1}^{m}\lambda_{i}g_{i}(\mathbf{x}),
\]
where stationary points satisfy $\nabla_{\!\mathbf{x}}\mathcal{L}=\mathbf{0}$ and $g_i(\mathbf{x})=0$. For more details about Lagrange multipliers, see Section 14.8 of \cite{hass2017thomas}. Applying this framework to our problem with $f=x+l$ and the four constraints  establishes the necessary and sufficient optimality conditions, we have the following theorem:

\begin{theorem} \label{thm 3.3}
        The minimum point of $f(x,l,\theta,\theta_1,\theta_2)=x+l$ under the four constraints: 
        \begin{equation}
            \begin{aligned}
            g_1&=\frac{-\cos \theta+\cos \theta_1 \cos \theta_2}{\sin \theta_1 \sin \theta_2}-\frac{\frac{\sqrt{2}}{2}+\cos(\frac{\theta}{2}-\theta_1)\sin \theta_2}{\sin(\frac{\theta}{2}-\theta_1)\cos \theta_2}=0\\
                g_2&=\cosh{l} - \frac{\cos\left(\frac{\theta}{2}\right)+\cos\theta \cos\left(\frac{\theta}{2}\right)}{\sin\theta\sin\left(\frac{\theta}{2}\right)}=0\\
                g_3&=\cosh x-\frac{\cos \theta_2-\cos \theta\cos \theta_1}{\sin \theta\sin \theta_1}=0\\
    g_4&=\cosh(\frac{l}{2})-\frac{\cos \theta_1-\cos \theta\cos \theta_2}{\sin \theta \sin \theta_2}=0
            \end{aligned}
        \end{equation}
        is given by
        $$ p_0=\left(\cosh^{-1}\left(\frac{7}{2}\right),\cosh^{-1}\left(\frac{3}{2}\right),\cos^{-1}\left(\frac{1}{5}\right),\tan^{-1}\left(\frac{2\sqrt{6}}{41}\right),\tan^{-1}\left(\frac{6\sqrt{30}}{25}\right)\right).$$
\end{theorem}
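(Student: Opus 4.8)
The plan is to verify directly that the proposed point $p_0$ satisfies the Lagrange stationarity conditions and the four constraints, and then to argue that it is in fact the global minimum. I would proceed as follows. First I would substitute the candidate values $\theta = \cos^{-1}(1/5)$, $\theta_1 = \tan^{-1}(2\sqrt6/41)$, $\theta_2 = \tan^{-1}(6\sqrt{30}/25)$ into the four constraint functions $g_1,g_2,g_3,g_4$ and check that they vanish; this is a finite (if tedious) computation in the field $\mathbb{Q}(\sqrt6,\sqrt{30})$ after recording $\cos\theta=1/5$, $\sin\theta=2\sqrt6/5$, $\cos(\theta/2)=\sqrt{3/5}$, $\sin(\theta/2)=\sqrt{2/5}$, and the rationalized expressions for $\cos\theta_i,\sin\theta_i$. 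Using the half-angle identity $\cosh^{-1}(7/2)=2\cosh^{-1}(3/2)$ (equivalently $\cosh l = 2\cosh^2(l/2)-1$ with $\cosh(l/2)=3/2$), one confirms $g_2=g_4=0$ simultaneously, and $g_3=0$ pins down $\cosh x = 7/2$ as well, so that $x+l = 3\cosh^{-1}(7/2)$ per quadrilateral-pair; summing the eight copies gives $6\cosh^{-1}(7/2)=L_0$ as in Theorem~\ref{thm 4.3}.

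Next I would set up the Lagrangian $\mathcal{L}=f-\sum_{i=1}^4\lambda_i g_i$ with $f(x,l,\theta,\theta_1,\theta_2)=x+l$ and write out the five partial derivative equations $\partial\mathcal{L}/\partial x=\partial\mathcal{L}/\partial l=\partial\mathcal{L}/\partial\theta=\partial\mathcal{L}/\partial\theta_1=\partial\mathcal{L}/\partial\theta_2=0$. The equations $\partial_x\mathcal{L}=0$ and $\partial_l\mathcal{L}=0$ are linear in the $\lambda_i$ and let one eliminate two multipliers immediately (only $g_3$ depends on $x$; only $g_2,g_4$ depend on $l$, so $\lambda_3=1/\sinh x$ and a relation between $\lambda_2,\lambda_4$ drop out). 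Substituting back, the remaining three equations become a closed system in $\lambda_1$ and the two surviving ratios; I would then verify that $p_0$ together with the induced multiplier values solves this system, which again is an algebraic check over $\mathbb{Q}(\sqrt6,\sqrt{30})$. The cleanest way to organize this is to differentiate the constraint relations \eqref{triangle1}--\eqref{triangle3} implicitly to get $dx/d\theta$, $dl/d\theta$ along the constraint manifold (using $\theta$ as the free parameter, as the paragraph before the theorem suggests), and then check that $d(x+l)/d\theta=0$ at the candidate $\theta$; this is equivalent to the Lagrange conditions but reduces the bookkeeping to a single-variable derivative.

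Finally, to upgrade "stationary point" to "global minimum" I would invoke the convexity of $\mathrm{length}_X(\alpha,\beta)$ along the Teichmüller parameter — stated in the excerpt as following from Proposition~2.4 of \cite{Schmutz1993} and used to assert uniqueness — together with the fact that the constraint manifold, parametrized by $\theta\in(0,2\pi)$ (the admissible range from the area condition), is one-dimensional and that $x+l\to\infty$ as $\theta$ approaches the endpoints of its admissible subinterval (one of the polygons degenerates). A single interior critical point of a smooth function on an interval that blows up at both ends must be the global minimum; combined with uniqueness from convexity this identifies $p_0$. The main obstacle I anticipate is purely computational: the constraint $g_1$ involves the auxiliary geodesic $w$ through the identity $\cosh w$ written two ways, and checking that the two expressions agree at $p_0$ — equivalently that the angle-splitting $\theta = (\theta/2-\theta_1)+\theta_1$ and $\pi/2 = \theta_2+(\pi/2-\theta_2)$ is consistent with the prescribed $\pi/4$ angle — requires carefully rationalizing nested square roots; I would handle this by clearing denominators and reducing to a polynomial identity in $\sqrt6$ and $\sqrt{30}$ that can be checked coefficient-by-coefficient (or, as the paper notes, verified symbolically in SageMath).
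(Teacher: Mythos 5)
Your proposal follows essentially the same route as the paper: set up the Lagrangian for $f=x+l$ subject to $g_1,\dots,g_4$, verify the stationarity conditions at $p_0$ by exact symbolic computation (the paper does this in Mathematica, producing explicit multipliers $\boldsymbol{\lambda_0}$), and appeal to the convexity of the length function for uniqueness/globality of the minimum. Your additional remarks --- eliminating multipliers via $\partial_x\mathcal{L}$ and $\partial_l\mathcal{L}$, reducing to the single-variable condition $d(x+l)/d\theta=0$ along the constraint curve, and checking blow-up at the endpoints of the admissible $\theta$-interval --- are consistent refinements of the same argument and in fact make the passage from ``stationary point'' to ``global minimum'' more explicit than the paper does.
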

\begin{proof}
    Let 
    \begin{equation*}
        \mathcal{L}(x,l,\theta,\theta_1,\theta_2,\boldsymbol{\lambda})=x+l-\sum_{i=1}^4\lambda_ig_i,
    \end{equation*}
     where $\boldsymbol{\lambda}=(\lambda_1,\lambda_2,\lambda_3,\lambda_4)$.
It can be obtained by the Lagrange multipliers that $\boldsymbol{p_0}$ is the minimum point if and only if there exists $\boldsymbol{\lambda_0}=(\lambda_1^0,\ldots,\lambda_n^0)$ such that at $(\boldsymbol{p_0},\boldsymbol{\lambda_0})$, the following equalities hold:
\begin{equation*}
    \begin{aligned}
    &\mathcal{L}(\boldsymbol{p_0},\boldsymbol{\lambda_0})=0,~\frac{\partial \mathcal{L}}{\partial \lambda_i}=0\\
        \frac{\partial \mathcal{L}}{\partial x}&=\frac{\partial \mathcal{L}}{\partial l}=\frac{\partial \mathcal{L}}{\partial \theta}=\frac{\partial \mathcal{L}}{\partial \theta_1}=\frac{\partial \mathcal{L}}{\partial \theta_2}=0
    \end{aligned}
\end{equation*}
$\boldsymbol{\lambda_0}$ can be found through calculation using Mathematica \footnote{Code associated with this work is available on the arXiv version of this paper}, where $\boldsymbol{\lambda_0} =(\frac{70}{341},\frac{3}{5 \sqrt{5}},\frac{2}{3 \sqrt{5}},\frac{14}{5})$.
\end{proof}


\section{Length of arbitrary filling pairs in genus two}
In this section, we show that the length of any filling pair on a hyperbolic surface of genus two is bounded from below by $L_0$, where $L_0$ is the shortest length of a minimal filling pair of type $\{ 4,12 \}$. To achieve this, we need some results on graph theory. First, we recall some basic definitions.

\begin{definition}
    A subgraph $H\subset G$ of a graph $G$ is called \textit{spanning} if it contains all the vertices of $G$. A subgraph $H\subset G$ is called \textit{spread} if any two edges $e$ and $e'$ emanating from a vertex are not consecutive in $G$.
\end{definition}

We use the following result due to Jonah Gaster (see Section 2 \cite{Gaster}).
\begin{lemma}\label{lem 4.2}
    Let $(\alpha,\beta)$ be a filling pair of simple closed curves in minimal position on a closed orientable surface $S$. If $\alpha$ is non-separating, then the dual graph to $\alpha \cup \beta$ admits a spread spanning tree. If $\alpha$ is separating, the dual graph admits a spread spanning forest with two components.
\end{lemma}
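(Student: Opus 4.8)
The plan is to exhibit the required subgraph explicitly, taking it to be a spanning forest of the subgraph $G^*_\beta$ of the dual graph spanned by the edges dual to arcs of $\beta$; the dichotomy between a spanning tree and a two-component spanning forest will fall out of whether $S\setminus\alpha$ is connected or not.

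Write $G=\alpha\cup\beta$, regarded as a $4$-valent graph cellularly embedded in $S$ (its complementary regions are disks, since $(\alpha,\beta)$ fills), with dual graph $G^*$. The first step is a purely combinatorial observation. Since $\alpha$ and $\beta$ are simple and in minimal position, the four edges at each vertex of $G$ alternate between arcs of $\alpha$ and arcs of $\beta$; hence every complementary polygon has sides alternating between $\alpha$-arcs and $\beta$-arcs, so it has an even number of sides, at least $4$. Dually, at each vertex $F$ of $G^*$ the incident edges alternate, in the cyclic order, between $\alpha$-type edges (those dual to $\alpha$-arcs) and $\beta$-type edges. I would use this to see that $G^*_\beta$ is spread: at a vertex $F$, two distinct $\beta$-type edges are separated in the cyclic order by at least one $\alpha$-type edge, and the two ends of a $\beta$-type loop at $F$ are also non-consecutive, since every corner of the polygon $F$ joins one $\alpha$-arc to one $\beta$-arc and is therefore never formed by two copies of a single $\beta$-arc. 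Thus every subgraph of $G^*_\beta$---in particular every spanning forest of it---is spread, and it only remains to count the components of $G^*_\beta$.

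The second step is topological. Each complementary polygon has interior disjoint from $\alpha$, so it lies in a single component of $S\setminus\alpha$; and an edge of $G^*_\beta$ joins two faces sharing a $\beta$-arc, a move that never crosses $\alpha$, so each component of $G^*_\beta$ lies in one component of $S\setminus\alpha$. The essential claim is the converse: for a component $U$ of $S\setminus\alpha$, all faces contained in $U$ are joined to one another through $\beta$-type edges; equivalently, the dual graph of the filling arc system $\beta\cap\overline{U}$ on the compact surface $\overline{U}$ (the corresponding component of $S$ cut along $\alpha$) is connected. I would prove this by contradiction: if it were disconnected, I would collect the faces in one of the two classes, together with the $\beta$-arcs lying between two faces of that class, into a subsurface $W_1\subseteq\overline{U}$, and the remaining faces and arcs into $W_2$. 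Since no $\beta$-arc runs between the two classes, $\partial W_i\subseteq\partial\overline{U}$, so $\overline{U}=W_1\cup W_2$ with disjoint interiors, whence $\mathrm{int}(\overline{U})=\mathrm{int}(W_1)\sqcup\mathrm{int}(W_2)$ is disconnected, contradicting the fact that a connected surface with boundary has connected interior (push inward along a collar of $\partial\overline{U}$). Hence $G^*_\beta$ has exactly as many components as $S\setminus\alpha$.

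Assembling the pieces: if $\alpha$ is non-separating then $S\setminus\alpha$ is connected, so $G^*_\beta$ is connected, and any spanning tree of $G^*_\beta$ is a spread spanning tree of $G^*$; if $\alpha$ is separating then $S\setminus\alpha$ has two components, each carrying at least one face, so $G^*_\beta$ has exactly two components, and any of its spanning forests is a spread spanning forest of $G^*$ with two components. The one step that requires genuine care---and the main obstacle---is the connectedness of the arc-system dual graph on $\overline{U}$; everything else is bookkeeping with the alternation of $\alpha$- and $\beta$-arcs around vertices and faces, plus a glance at the degenerate case of a single complementary disk, for which $G^*$ is a wedge of loops and the statement holds vacuously.
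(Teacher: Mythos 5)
The paper does not actually prove this lemma: it is quoted verbatim from Gaster and used as a black box, so there is no in-paper argument to compare against. That said, your proof is correct, and it is essentially the argument from Section~2 of Gaster's paper: the subgraph $G^*_\beta$ of the dual graph spanned by edges dual to $\beta$-arcs is spread because the sides of every complementary polygon alternate between $\alpha$-arcs and $\beta$-arcs (this also handles loops, as you note), and its components are in bijection with the components of $S\setminus\alpha$. Your contradiction argument for the key step --- that all faces in one component $U$ of $S\setminus\alpha$ are connected through $\beta$-type edges, via the decomposition $\overline{U}=W_1\cup W_2$ with $W_1\cap W_2\subseteq\partial\overline{U}$ and the connectedness of the interior of a connected compact surface with boundary --- is sound, and you correctly use that every face has at least one $\beta$-side so that $G^*_\beta$ is spanning. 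The only cosmetic quibble is the final remark that the single-disk case holds ``vacuously'': it holds non-vacuously (the single vertex is itself a spread spanning tree, and a separating $\alpha$ forces at least two faces, so no conflict arises), but this does not affect the argument.
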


\subsection*{An observation on the number of sides of the complementary polygons of a filling pair on \texorpdfstring{$S_2$}{TEXT}.} Let $S_2\setminus(\alpha\cup \beta) = \bigsqcup_{i=1}^f D_i$. Let the number of sides in $D_i$ be $|D_i|=4k_i + j_i$, where $k_i\geq 1$ and $j_i=0 \text{ or } 2 , i=1,\dots,f$. Considering $(\alpha\cup \beta)$ as a 4-valent graph, we have
\begin{align}\label{eq 2.1}
    i(\alpha,\beta)&=\frac{1}{4}\sum_{i=1}^f |D_i| \nonumber \\
    &=\sum_{i=1}^f k_i + \frac{1}{4}\sum_{i=1}^f j_i
\end{align}
Also, by using the Euler's characteristic arguments,
\begin{equation}\label{eq 2.2}
    i(\alpha,\beta)=f+2.
\end{equation}
Based on equations \eqref{eq 2.1} and \eqref{eq 2.2}, one of the following cases occur:
\begin{itemize}
    \item $k_i=1$ for all $i\in \{ 1,\dots,f\}$ and $j_i=2$ for exactly four $i\in \{ 1,\dots,f\}$.
    \item $k_i=2$ for one $i\in \{ 1,\dots,f\}$, others $k_i$ are 1 and $j_i=2$ for exactly two $i\in \{ 1,\dots,f\}$.
    \item $k_i=2$ for two $i\in \{ 1,\dots,f\}$.Then the others $k_i$ are 1 and all $j_i$'s are zero.
    \item $k_i=3$ for one $i$, $k_i=1$ for other $i$'s and all $j_i$'s are zero.
\end{itemize}
From the above observation, we have the following lemma.
\begin{lemma}
        Any filling pair on $S_2$ has the property that at least one of the polygons in its complement contains $4k$ sides, for some $k \in \{1,2,3\}$, with one possible exception where the complement contains $4$ polygons, each with $6$ sides.
\end{lemma}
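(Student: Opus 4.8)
The plan is to run through the four possibilities for the data $(k_i,j_i)$ produced by the observation immediately preceding the statement — these come from comparing \eqref{eq 2.1} with \eqref{eq 2.2} — and verify the assertion case by case. Two preliminary remarks make this almost mechanical. First, a complementary polygon $D_i$ with $j_i=0$ has exactly $4k_i$ sides. Second, in each of the four cases one has $\sum_i(k_i-1)\le 2$, so $k_i\le 3$ for every $i$; hence any polygon with $j_i=0$ is automatically a $4k$-gon with $k\in\{1,2,3\}$. So in the two cases in which all $j_i=0$ (a single polygon with $k_i=3$ and the rest quadrilaterals, or two polygons with $k_i=2$ and the rest quadrilaterals), every complementary region is a $4k$-gon and there is nothing to check.

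Next I would handle the case in which exactly one $k_i$ equals $2$, the rest being $1$, and exactly two of the $j_i$ equal $2$. If the distinguished polygon (the one with $k_i=2$) has $j_i=0$, it is an octagon and we are done. Otherwise it is a $10$-gon; then the second polygon with $j_i=2$ is a hexagon, and the remaining $f-2$ polygons all have $k_i=1$, $j_i=0$, i.e.\ are quadrilaterals, so a quadrilateral exists as soon as $f\ge 3$. Similarly, in the case in which all $k_i=1$ and exactly four of the $j_i$ equal $2$, necessarily $f\ge 4$: if $f>4$ there is a polygon with $j_i=0$, a quadrilateral, and if $f=4$ the complement is exactly four hexagons — which is precisely the exception permitted by the statement.

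The only leftover situation is $f=2$ inside the ``$10$-gon'' subcase, where the complement would consist of a $10$-gon together with a hexagon, i.e.\ the filling pair would be of type $\{6,10\}$. Here I would invoke \eqref{eq 2.2}: $f=2$ forces $i(\alpha,\beta)=4$, so the pair is minimally intersecting, and Theorem~\ref{prop1.1} excludes type $\{6,10\}$; hence this situation cannot arise, and the argument is complete. The only step that is not pure bookkeeping is this last appeal to Theorem~\ref{prop1.1} — everything else is careful accounting with \eqref{eq 2.1} and \eqref{eq 2.2}, so I expect no real obstacle beyond organizing the cases cleanly.
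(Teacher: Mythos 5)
Your proof is correct and follows essentially the same route as the paper, which deduces the lemma directly from the four-case enumeration preceding it without writing out the details. The one point you add explicitly --- that the $f=2$ subcase of the second configuration would yield a $\{6,10\}$ pair and must be excluded by Theorem~\ref{prop1.1} --- is genuinely needed for the lemma as stated and is left implicit in the paper, so your bookkeeping is if anything more complete.
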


Now we prove the main theorem of this section.
\begin{theorem}
    Let $(\alpha,\beta)$ be any filling pair on a hyperbolic surface $X$ of genus two. Then 
    $$l_X(\alpha,\beta)\geq L_0,$$
    where $L_0= 6 \cdot \cosh^{-1}(7/2)$, the shortest length of the minimal filling pair of type $\{ 4,12 \}$.
\end{theorem}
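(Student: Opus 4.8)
The plan is to dispose of the minimally intersecting case first and then reduce the general case to an area/isoperimetry estimate fed by the combinatorial observation just proved. If $(\alpha,\beta)$ is minimally intersecting, then by Theorem~\ref{prop1.1} and Theorem~\ref{thm1.2} it has type $\{4,12\}$ or $\{8,8\}$, so Theorems~\ref{thm1.3} and~\ref{thm1.4} give $\mathrm{length}_X(\alpha,\beta)\ge\min\{L_0,\,8\cosh^{-1}(\sqrt2+1)\}=L_0$, since $8\cosh^{-1}(\sqrt2+1)\approx 12.23>L_0\approx 11.55$. So I may assume $i(\alpha,\beta)\ge 5$, i.e.\ $X\setminus(\alpha\cup\beta)=\bigsqcup_{i=1}^{f}D_i$ with $f=i(\alpha,\beta)-2\ge 3$. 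By Gauss--Bonnet $\sum_i\mathrm{Area}(D_i)=2\pi|\chi(S_2)|=4\pi$, and since each arc of $\alpha\cup\beta$ appears exactly twice on $\bigsqcup_i\partial D_i$ we have $2\,\mathrm{length}_X(\alpha,\beta)=\sum_i\mathrm{Perim}(D_i)$; hence it suffices to prove $\sum_i\mathrm{Perim}(D_i)\ge 2L_0$.

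The basic tool is the isoperimetric inequality for hyperbolic polygons \cite{Bezdek}: $\mathrm{Perim}(D_i)\ge p(n_i,\mathrm{Area}(D_i))$, where $n_i=|D_i|$ and $p(n,A)$ is the perimeter of the regular hyperbolic $n$-gon of area $A$; I will use that $A\mapsto p(n,A)$ is increasing with $p(n,A)\to\infty$ as $A\to(n-2)\pi$, and that $n\mapsto p(n,A)$ is decreasing. Now apply the dichotomy recorded in the observation preceding the theorem (a consequence of \eqref{eq 2.1}--\eqref{eq 2.2}): either (i) $f=4$ and all four $D_i$ are hexagons, or (ii) some $D_{i_0}$ has $4k$ sides with $k\in\{1,2,3\}$. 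In case (i) each $p(6,\cdot)$ blows up at $4\pi$, so the minimum of $\sum_{i=1}^{4}p(6,A_i)$ subject to $\sum_iA_i=4\pi$ is attained at an interior configuration, and a direct computation shows it exceeds $2L_0$; thus $\mathrm{length}_X(\alpha,\beta)>L_0$ there.

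Case (ii) is the crux, and here the naive bound is genuinely insufficient: letting a $12$-gon swell toward area $4\pi$ while its companion squares shrink makes $\sum_i p(n_i,\mathrm{Area}(D_i))$ drop below $2L_0$. The resolution, and the point of Gaster's Lemma~\ref{lem 4.2}, is that the side lengths of the $D_i$ are not free --- an arc of $\alpha\cup\beta$ has a single length and borders at most two of the $D_i$, so a polygon cannot be nearly regular while its neighbours collapse. Concretely I would take a spread spanning tree of the dual graph (a spread spanning forest with two components if $\alpha$ is separating) and merge the $D_i$ across the arcs dual to the tree $T$; the spread condition keeps every angle strictly between $0$ and $\pi$, so the result is a genuine hyperbolic polygon $P$ (resp.\ pair $P_1\sqcup P_2$) of total area $4\pi$ and total perimeter $2\,\mathrm{length}_X(\alpha,\beta)-2\,\ell_X(T)$. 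One then plays off two alternatives: if $\ell_X(T)$ is not small it is absorbed into the isoperimetric bound for the merged polygon; if it is small then every arc of $T$ is short, which pins the merged polygon to few effective sides --- and already $p(8,4\pi)\approx 24.5>2L_0$ --- while the polygons not yet merged are then bounded away from degeneracy, so the per-polygon isoperimetric bounds close the argument. Bookkeeping over the finitely many combinatorial types of \eqref{eq 2.1}--\eqref{eq 2.2}, together with the collar lemma (Section~4.1 of \cite{Buser}), which forces $\mathrm{length}_X(\alpha,\beta)\ge L_0$ once $i(\alpha,\beta)$ is large because $\beta$ must traverse the collar of $\alpha$ about $i(\alpha,\beta)$ times, then finishes the proof. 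I expect the real difficulty to be exactly this bookkeeping --- turning ``the side lengths are linked'' into one uniform inequality trading $\ell_X(T)$ against the isoperimetric deficit of the merged polygon over all admissible types --- whereas the minimal case, the $4$-hexagon case, and the large-$i$ collar estimate are routine.
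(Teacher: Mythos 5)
Your treatment of the minimally intersecting case is correct, and your handling of the four-hexagon case works (you apply the isoperimetric inequality to each hexagon separately, which needs convexity of $A\mapsto p(6,A)$ to locate the minimum at the equal-area configuration $4\,p(6,\pi)=24\cosh^{-1}(2)\approx 31.6>2L_0$; the paper instead glues the hexagons in pairs into two octagons of total area $4\pi$ and quotes the $\{8,8\}$ bound). You also correctly diagnose why per-polygon isoperimetry fails in the remaining case: a near-regular $12$-gon of area near $4\pi$ plus collapsing squares has total perimeter near $12\cosh^{-1}(\sqrt{3}+1)\approx 19.4<2L_0\approx 23.1$. But precisely at the crux your argument stops being a proof. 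The proposed dichotomy --- absorb $\ell_X(T)$ into the isoperimetric deficit when $T$ is long, and argue the merged polygon has few effective sides when $T$ is short --- is never turned into an inequality, and you say so yourself. The collar-lemma remark is likewise only qualitative: it comes with no threshold on $i(\alpha,\beta)$, so it cannot truncate the case analysis. As written, the main case is unproved.

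The idea you are missing, and the one the paper uses, is that one should \emph{not} merge all complementary disks into a single polygon: a corner count shows that gluing along a full spread spanning tree always produces (after discarding angle-$\pi$ vertices) a $12$-gon of area $4\pi$, whose isoperimetric bound is the insufficient $19.4$ above. Instead, the paper singles out one complementary $4k$-gon, $k\in\{1,2,3\}$ (guaranteed by the combinatorial lemma), keeps it intact, and glues only the remaining disks along a spread spanning tree from Lemma~\ref{lem 4.2} (or uses the two-component spread forest in the separating case). Counting corners, the glued piece is, up to straight angles, a $(16-4k)$-gon, so the configuration reduces to exactly a $\{4,12\}$ or $\{8,8\}$ pair of polygons of total area $4\pi$; since each gluing only decreases total perimeter, $2\,\mathrm{length}_X(\alpha,\beta)$ dominates the total perimeter of the reduced configuration, and the bounds of Theorems~\ref{thm: length} and~\ref{thm 4.3} apply. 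This eliminates the $\ell_X(T)$-versus-deficit trade-off you were trying to set up. Note that the reduction must land on one of the two already-analyzed types precisely because of the degeneration you observed: for a $\{4,12\}$ configuration it is the finer analysis behind Theorem~\ref{thm 4.3}, not isoperimetry alone, that produces the constant $L_0$.
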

\begin{proof}
    First, we consider the possible exceptional case, that is $X\setminus (\alpha \cup \beta)$ consists of four hyperbolic hexagons. In this case, we take the hexagons in pairs and attach them along a common side. This will produce two polygons $D_1$ and $D_2$ with total area $4\pi$. The angles at the common vertices are all $\pi$ and hence we may consider each of the attached polygons as an octagon. Also, one can see that $l_X(\alpha,\beta)\geq \mathrm{Perim}(D_1)+ \mathrm{Perim}(D_2)$. By the proof of Theorem \ref{thm: length}, we have $l_X(\alpha,\beta)\geq 8\cdot\cosh^{-1}\left(\sqrt{2}+1\right)$.

    Now, suppose that at least one of the complementary polygons has $4k$ sides, for some $k \in \{1,2,3\}$. To prove the theorem in this case, we apply Lemma~\ref{lem 4.2}.

    If both of the curves are separating, then by Lemma~\ref{lem 4.2}, the dual graph of $(\alpha,\beta)$ admits a spread spanning forest with two components. By attaching the polygons along these trees, we obtain two octagons (see Section~3 of \cite{Gaster} for a detailed discussion).

    If one of the curves non-separating, we fix the $4k$-gon and attach the remaining polygons according to the spanning tree obtained from Lemma~\ref{lem 4.2}. In this way, we obtain a $4k$-gon (the fixed one) and a $(4-k)$-gon. Applying Theorem~\ref{thm: length} or Theorem~\ref{thm 3.3}, we then have
    $$
    l_X(\alpha,\beta) \geq 8\cdot\cosh^{-1} \! \left(\sqrt{2}+1\right)
    \quad \text{or} \quad 
    l_X(\alpha,\beta) \geq L_0.
    $$
    This completes the proof.
\end{proof}

\bibliographystyle{alpha}
\bibliography{bibliography}

\end{document}